\title[Divides with cusps, shadows, and transvergent diagrams]
{Divides with cusps, shadows, and transvergent diagrams}
\author{Ryoga Furutani}
\address{
Department of Mathematics~ \slash ~International Institute for Sustainability with Knotted Chiral Meta Matter (WPI-SKCM$^2$), 
Hiroshima University, 1-3-1 Kagamiyama, Higashi-Hiroshima, Hiroshima 739-8526, Japan}
\email{ryoga.furutani0409@gmail.com}
\theoremstyle{plain}
\newtheorem*{theorem*}{Theorem}
\newtheorem*{lemma*} {Lemma}
\newtheorem*{corollary*} {Corollary}
\newtheorem*{proposition*}{Proposition}
\newtheorem*{conjecture*}{Conjecture}
\newtheorem{theorem}{Theorem}[section]
\newtheorem{proposition}[theorem]{Proposition}
\theoremstyle{remark}
\newtheorem*{definition}{Definition}
\newtheorem*{claim*}{Claim}
\newtheorem{example}{Example}
\theoremstyle{definition}
\newtheoremstyle{citing}
  {}
  {}
  {\itshape}
  {}
  {\bfseries}
  {.}
  {.5em}
  {\thmnote{#3}}
\theoremstyle{citing}
\newcommand{\ZZ}{\mathbb{Z}}
\newcommand{\CC}{\mathbb{C}}
\newcommand{\Nbd}{\operatorname{Nbd}}
\newcommand{\Int}{\operatorname{Int}}
\newcommand\erase{\bgroup\markoverwith{\textcolor{red}{\rule[.5ex]{2pt}{0.4pt}}}\ULon}
\begin{document}

\maketitle

\begin{abstract}
A link $L$ in $S^3$ is called a symmetric link if it is preserved by a $\pi$ rotation around a closed geodesic in $S^3$. 
Any symmetric link can be depicted by a diagram with a symmetry axis lying on the plane of the diagram, called a transvergent diagram. 
Recently, Sugawara proved that any symmetric link can be represented by a divide with cusps, which is a generalization of A'Campo's divide that allows a finite number of cusps. 
In this paper, we introduce a generalization of A'Campo's divide in terms of Turaev's shadow, called a divide with gleams. 
By using divides with gleams, we provide an algorithm to obtain a divide with cusps that represents a symmetric link from its given transvergent diagram. Conversely, we also provide an algorithm to draw a transvergent diagram of the link of a given divide with cusps. 
\end{abstract}

\vspace{1em}

\begin{small}
\hspace{2em}  \textbf{2020 Mathematics Subject Classification}: 
57K10; 57Q60, 57R05


\hspace{2em} 
\textbf{Keywords}: 
divide, divide with cusps, link, transvergent diagram, shadow.
\end{small}

\section*{Introduction}
A link $L$ in $S^{3}$ is called a {\it symmetric link} if there exists a $\pi$ rotation $\iota$ around a closed geodesic in $S^3$ such that $\iota(L) = L$. A {\it diagram} of a link $L$ is the image of a generic projection of $L$ from $S^{3}$ to $S^{2}$ equipped with over/under information at each crossing of the image.
A link diagram is called a {\it transvergent diagram} if there exists a reflection through a great circle on $S^{2}$ that preserves the diagram 
(see Figure \ref{figure:transvergent_diagram}).
\begin{figure}[htbp]
\centering\includegraphics[width=6cm]{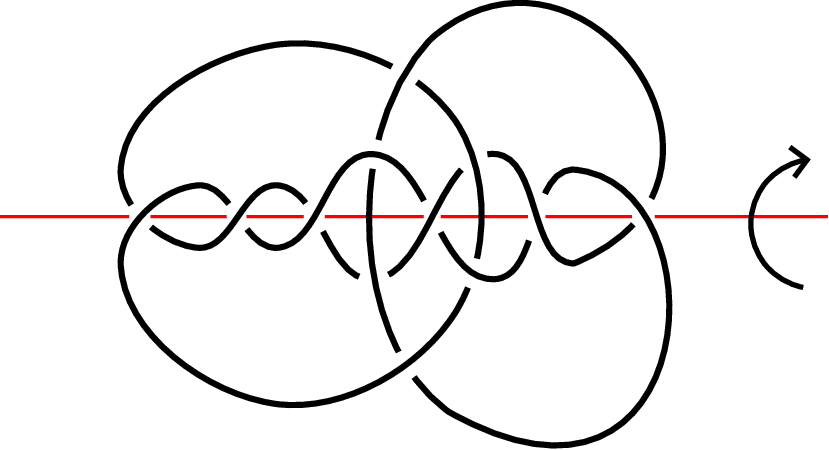}
\caption{A transvergent diagram of a symmetric link.}
\label{figure:transvergent_diagram}
\end{figure}
It is easily seen that any link with a transvergent diagram is a symmetric link, and conversely, any symmetric link has a transvergent diagram. 
A transvergent diagram is used as a fundamental tool in the study of strongly invertible knots or $2$-periodic knots (see e.g. \cite{BI22, HHS23, L22, L23}).

A {\it divide} is the image of a proper and generic immersion of a compact $1$-manifold into the $2$-disk. A divide is a concept introduced by A'Campo \cite{Camp98} in his study on isolated singularities of complex plane curves. 
Each divide $P$ defines a link $L_{P}$ in $S^{3}$, called the {\it link of the divide} $P$. 
A {\it divide with cusps} is a divide with a finite number of cusps, which is a generalization of A'Campo's divide introduced by Sugawara-Yoshinaga  \cite{SY22} in their study of the complements of the complexifications of real line arrangements.
Each divide with cusps $P$ also defines a link $L_{P}$ in $S^{3}$, called the {\it link of the divide with cusps} $P$. 
Sugawara \cite{S24} proved that a link $L$ in $S^{3}$ is a symmetric link if and only if there exists a divide with cusps $P$ whose associated link $L_P$ is equivalent to $L$.

In this paper, we provide a transformation between these two representations of a symmetric link, a divide with cusps and a transvergent diagram. 
To state the main theorem of this paper, we introduce several notations and terminologies. 
Let $L$ be a symmetric link in $S^{3}$. 
Let $DL$ be the transvergent diagram of $L$. 
By forgetting the over/under information at the crossings of $DL$, 
we obtain an immersed curve on $S^{2}$. 
By the definition of a transvergent diagram, 
there exists a reflection of $S^{2}$ through a great circle of $S^{2}$ that preserves this immersed curve. 
We then take the quotient of $S^2$ and the immersed curve by this reflection, obtaining a $2$–disk $D$ together with an immersed curve in $D$.
In general, the resulting immersed curve intersects $\partial D$ at the parts corresponding to the crossing points of the original immersed curve that lie on the great circle of $S^2$.
We perturb the immersed curve slightly so as to remove these boundary intersections.
The resulting immersed curve in the disk $D$ is a divide, and we denote it by $P_{DL}$
(see Figure \ref{figure:DL_to_P_DL}). 
The divide $P_{DL}$ is called the {\it divide of the transvergent diagram $DL$}. 
Then, the following theorem holds:
\begin{figure}[htbp]
\centering
\includegraphics[width=12cm]{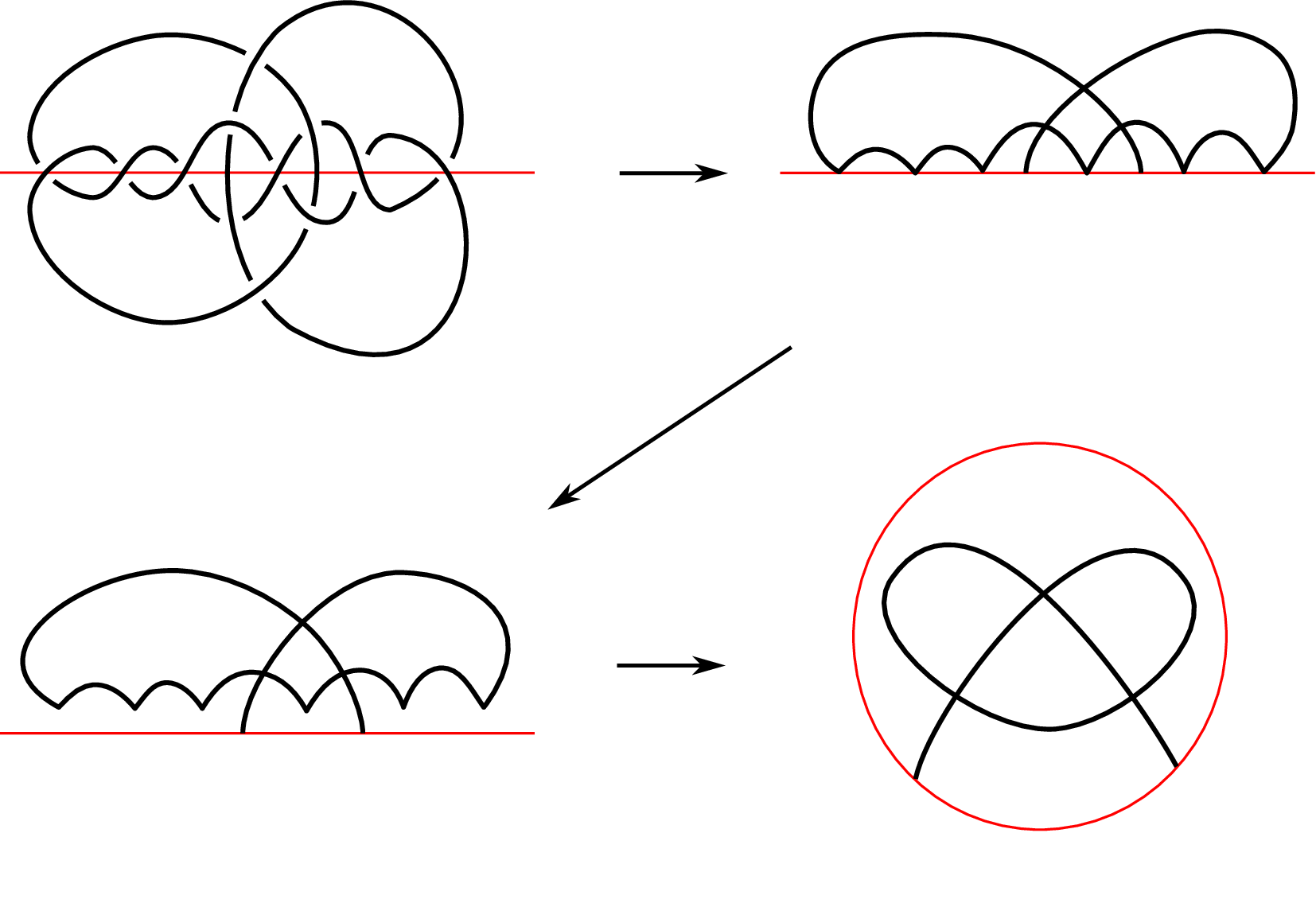}
\caption{The transvergent diagram $DL$ defines a divide $P_{DL}$.}
\label{figure:DL_to_P_DL}
\begin{picture}(400,0)(0,0)
\put(87,158){$DL$}
\put(290,32){$P_{DL}$}
\end{picture}
\end{figure} 

\begin{theorem}
\label{theo:main}
Let $L$ be a symmetric link, $DL$ be its transvergent diagram, and $P_{DL}$ be the divide of $DL$. Then, there exists a divide with cusps $P$ satisfying the following conditions:
\begin{itemize}
\item $P$ is obtained by adding a finite number of cusps to $P_{DL}$.
\item The link $L_P$ of $P$ is equivalent to the link $L$.
\end{itemize}
\end{theorem}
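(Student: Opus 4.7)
The plan is to exploit divides with gleams (the shadow-theoretic generalization announced in the abstract) as an intermediary between transvergent diagrams and divides with cusps. Both structures give rise to divides with gleams on a disk, and the theorem reduces to showing that the gleam data extracted from $DL$ can be realized by adding cusps to $P_{DL}$.

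First, I would associate to $DL$ a divide with gleams $(P_{DL}, g_{DL})$ on $D = S^{2}/\rho$ whose link, defined via Turaev's shadow construction, is $L$. Every double point of $P_{DL}$ lifts under $q_{\rho}$ either to a $\rho$-symmetric pair of crossings of $DL$ or to a single crossing on the axis $r$, and the over/under information at these crossings determines a half-integer contribution to the gleams on the adjacent regions of $D \setminus P_{DL}$; boundary arcs of $P_{DL}$ on $\partial D$ contribute additional data recording how $\widehat{P_{DL}}$ meets the fixed circle $r$. The verification that the shadow link equals $L$ is an unwinding of the double cover: lifting the shadow across $\partial D$ recovers the immersed surface framed by the gleams, whose oriented boundary lies above $\widehat{P_{DL}}$ and reproduces $DL \subset S^{2}$ together with its crossing data.

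Second, I would use the fact that every divide with cusps $P$ on $D$ itself defines a divide with gleams $(P, g_{P})$ whose link is $L_{P}$, and that adding a single cusp on an arc of a divide modifies the gleam of the adjacent regions in a prescribed way (by a controlled half-integer amount) without altering the underlying immersed curve elsewhere. The theorem then becomes the following realizability claim: one can choose a finite collection of cusps to be placed on arcs of $P_{DL}$ so that the cumulative gleam adjustments convert the canonical A'Campo-type gleams of $P_{DL}$ (viewed as a divide with no cusps) into the gleams $g_{DL}$ produced from $DL$, which by Step 1 guarantees $L_{P} \simeq L$.

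The main obstacle is this realizability step. A priori, each cusp modifies the gleams of only the two regions meeting its supporting arc, so one must check that cusp contributions span enough of the gleam lattice to match $g_{DL}$ consistently across all regions of $D \setminus P_{DL}$, including those touching $\partial D$. I would attack this by induction on the number of double points of $P_{DL}$, reducing the discrepancy region by region: at each stage, place the minimal number of cusps on suitable arcs to align the gleam locally, and propagate the remaining adjustment to a neighboring region. A parity check will likely be needed to confirm that the total gleam discrepancy inherited from $DL$ is compatible with the increments produced by individual cusps; since both the crossing data of $DL$ and cusps contribute gleam changes in the same half-integer lattice, the process should terminate in finitely many steps and output the required divide with cusps $P$.
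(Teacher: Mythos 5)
Your proposal follows essentially the same route as the paper: it first converts $DL$ into a divide with gleams $(P_{DL},gl)$ whose link is $L$ by summing local gleam contributions at the crossings (the paper's Proposition \ref{prop:transvergent_diagrams_to_divides_with_gleams}), then realizes the difference between this gleam and the canonical gleam of $P_{DL}$ by adding cusps, propagating the integer discrepancies region by region (the paper's Proposition \ref{prop:divides_with_gleams_to_divides_with_cusps}, organized as a maximal tree in the dual graph of regions). The only point your sketch leaves implicit is that the propagation must terminate in an \emph{external} region, which carries no gleam and therefore absorbs the leftover adjustment --- this is exactly why the paper roots its tree at a vertex $v_0$ corresponding to an external region.
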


We will give a constructive proof of Theorem \ref{theo:main}. That is, we provide an algorithm for constructing a divide with cusps that satisfies the conditions of Theorem \ref{theo:main} from a given transvergent diagram. 
The main tool for the proof of Theorem \ref{theo:main} is Turaev's shadow introduced by Turaev \cite {Tur94}.
A shadow is a $2$-dimensional simple polyhedron that is properly, locally flatly embedded in a compact oriented $4$-manifold, such that the $4$-manifold collapses onto it.
A shadow is colored with a half-integer for each of its internal region, which is a generalization of the Euler number of a $D^2$-bundle over an orientable closed surface.
This coloring is called a {\it gleam}. 
A simple polyhedron $X$ equipped with a gleam $gl$ is called a {\it shadowed polyhedron}, and is denoted by $(X, gl)$. 
Turaev \cite {Tur94} gave a way to uniquely construct, up to diffeomorphism, a compact oriented $4$-manifold $W_X$ embedding the simple polyhedron $X$ as a shadow, from a given shadowed polyhedron $(X, gl)$. 
Moreover, since the embedding of the shadowed polyhedron $X$ into the $4$-manifold $W_X$ is proper, 
the boundary $\partial X$ of $X$ is embedded into $\partial W_X$, and thus forms a trivalent graph in $\partial W_X$.
Hence, a shadowed polyhedron simultaneously describes a compact oriented $4$-manifold and a trivalent graph in its boundary. 
Recently, a relationship between shadows and divides has been studied. 
Ishikawa-Naoe \cite{IN20} introduced a method for constructing a shadowed polyhedron from a divide on a compact orientable surface by using the doubling method, in order to describe the Lefschetz fibration associated with the divide. 
Here, a {\it divide on a compact orientable surface} is a generalization of A'Campo's divide on the $2$-disk, extended to a compact orientable surface, as introduced by Ishikawa \cite{I04}. 
Koda and the author \cite{FK23} established a new connection between the combinatorial structures of divides and the hyperbolic structures on the complements of links of divides by extending the work of Costantino-Thurston \cite{CT08} on the relationship between shadows and hyperbolic structures on $3$-manifolds. 
In this paper, we present a new method for constructing a shadowed polyhedron from a divide by using the natural involution of the tangent bundle of the disk. 
Through this method, we introduce a {\it divide with gleams}, which is a divide equipped with a coloring of its internal regions with half-integers. 
In Section \ref{section:proof_of_main_theorem}, we then show that a transvergent diagram $DL$ of a link $L$ naturally determines the divide with gleams $(P_{DL}, gl)$.
We then prove Theorem~\ref{theo:main} by presenting a constructive method that, starting from the gleam $gl$ of the divide with gleams $(P_{DL},gl)$, attaches cusps to $P_{DL}$ so that the resulting divide with cusps has the same associated link as $L$. 
Conversely, for a given divide with cusps, we can also construct a divide with gleams that represents its link. In Section \ref{section:A transvergent diagram of the link of a divide with cusps}, by using such a divide with gleams, we provide an algorithm to draw a transvergent diagram of the link of the given divide with cusps. 

\section{Preliminaries}
\label{section:preliminaries}

\subsection{Divides with cusps}
\label{subsection:divides_with_cusps}
Let $D := \{(x_{1}, x_{2}) \in \mathbb{R}^{2} \mid x_{1}^{2} + x_{2}^{2} \leq 1\}$ be the $2$-disk. 
\begin{definition}
Let $J$ be a compact $1$-manifold. 
A {\it divide with cusps} $P$ is the image of a smooth map $\alpha : J \to D$ satisfying the following conditions:

\begin{itemize}
\item $\alpha$ has finitely many cusp singularities $\{p_{1}, p_{2} \ldots, p_{n}\}$ in the interior of $J$, and the restriction of $\alpha$ to $J \setminus \{p_{1}, p_{2}, \ldots, p_{n}\}$ is an immersion with normal crossings and no triple points. Moreover, for any cusp singularity $p$, $\alpha(p)$ is not a double point of $P$.

\item $\alpha^{-1}(\partial D) = \partial J$, and $\alpha|_{\partial J}$ is injective. Moreover, $P$ intersects $\partial D$ transversely.
\end{itemize}
For a cusp singularity $p \in J$ of $\alpha$, the point $\alpha(p) \in P$ is called a {\it cusp} of $P$. 
A divide with cusps is called a {\it divide} if it does not have any cusps, which is precisely A'Campo's divide (A'Campo \cite{Camp98}). 
For a divide with cusps $P$, a point where $P$ intersects $\partial D$ is called an {\it endpoint} of $P$. 
A connected component of the set obtained from $P$ by removing all the double points and endpoints of $P$ is called an {\it edge} of $P$. 
A connected component of $D \setminus P$ is called a {\it region} of $P$. 
A region of $P$ that has no intersection with $\partial D$ is called an {\it internal region}. Otherwise, a region is called an {\it external region}. 
For a region $R$ of $P$, a cusp of $P$ directed toward the inside of $R$ is called an {\it inner cusp for} $R$. Otherwise, a cusp is called an {\it outer cusp for} $R$ (see Figure \ref{figure:divide_with_cusps}).
\begin{figure}[htbp]
\centering\includegraphics[width=6cm]{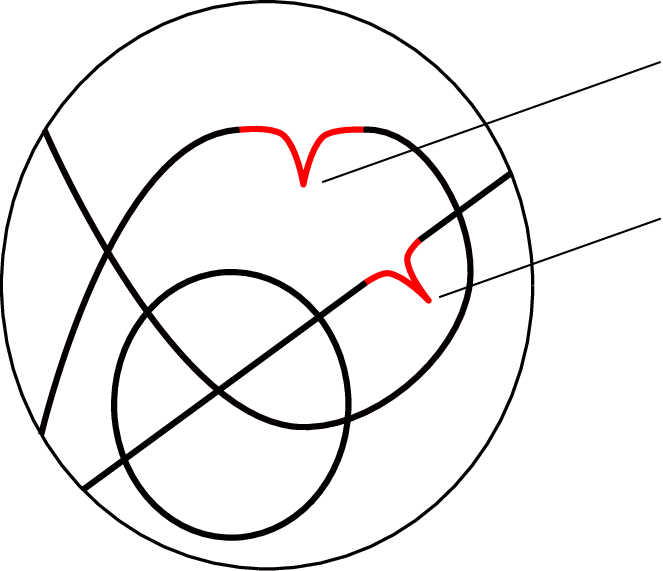}
\caption{A divide with cusps $P$. A region $R$ of $P$, an inner cusp for $R$ and an outer cusp for $R$.}
\begin{picture}(400,0)(0,0)
\put(190,130){$R$}
\put(285,181){an inner cusp for $R$}
\put(285,142){an outer cusp for $R$}
\end{picture}
\label{figure:divide_with_cusps}
\end{figure}
\end{definition} 
We identify the tangent bundle $TD$ of $D$ with $D \times \CC$, and identify the $4$-ball $D^{4}$ with the subset of the tangent bundle $TD$ defined as $\{(x, u) \in TD \mid |x|^{2} + |u|^{2} \leq 1\}$. We identify the $3$-sphere $S^3$ with $\partial D^4 = \{(x, u) \in TD \mid |x|^{2} + |u|^{2} = 1\}$.

\begin{definition}
Let $P$ be a divide with cusps. The {\it link $L_{P}$ of the divide with cusps $P$} is the link in $S^{3}$ defined by
$$L_{P} := \{(x, u) \in S^3 \mid x \in P, u \in T_x P \}.$$
If $P$ be a divide, then the link $L_{P}$ in $S^{3}$ is called the {\it link of the divide $P$}.
\end{definition}

Let $\iota : D^4 \to D^4$ be the involution of $D^4$ defined by $\iota(x,u) = (x, -u)$. 
The restriction $\iota |_{S^3} : S^3 \to S^3$ is the $\pi$ rotation of $S^3$ around the circle $\{(x,u) \in S^3 \mid u=0 \}$. 
By the definition of the link of a divide with cusps, the $\pi$ rotation $\iota |_{S^3}$ of $S^3$ preserves the link of a divide with cusps. 
Thus, the link of a divide with cusps is a symmetric link in $S^3$. 
As shown in the following theorem, Sugawara characterized symmetric links in terms of divides with cusps. 
\begin{theorem}[Sugawara \cite{S24}]
\label{theo:sugawara}
A link $L$ in $S^{3}$ is a symmetric link if and only if there is a divide with cusps $P$ such that $L_{P}$ is equivalent to $L$. Furthermore, if the link $L$ is strongly invertible, the divide with cusps $P$ consists of interval components and circle components, where the circle components have an even number of cusps. If $L$ is $2$-periodic, the divide with cusps $P$ consists only of circle components.
\end{theorem}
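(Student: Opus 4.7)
The plan is to follow the strategy sketched in the introduction: introduce divides with gleams on the $2$-disk as a common framework containing both divides with cusps and transvergent diagrams, attach two different gleams $gl_{1}, gl_{2}$ to the underlying curve $P_{DL}$, and then modify $(P_{DL}, gl_{2})$ by inserting cusps until its gleam agrees with $gl_{1}$; the resulting divide with cusps will then represent the link $L$. The technical backbone is Turaev's shadow reconstruction, applied to a shadowed polyhedron built from $P_{DL}$ via the natural tangent-bundle involution $(x,u)\mapsto(x,-u)$, so that the ambient $4$-manifold is $D^{4}$ and the boundary trivalent graph becomes our link.

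The first step is to define and compare the two gleams. From the transvergent diagram $DL$ I would read off a gleam $gl_{1}$ region by region, where each crossing of $DL$ contributes $\pm \tfrac{1}{2}$ to the gleams of the four adjacent regions according to the over/under information, so that $(P_{DL}, gl_{1})$ encodes exactly the link $L$. Independently, viewing $P_{DL}$ as an A'Campo divide without cusps, there is a canonical gleam $gl_{2}$ for which $(P_{DL}, gl_{2})$ reproduces the link $L_{P_{DL}}$ obtained from the divide construction in Section \ref{subsection:divides_with_cusps}. Both assignments will be shown to fit into the same shadow framework, so that the discrepancy $gl_{1}-gl_{2}$ becomes a well-defined half-integer function on the regions.

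The second step is to analyze the local effect of adding a cusp. A small neighbourhood of an inserted cusp modifies $F_{P}$ in a controlled way, and a direct shadow computation should show that inserting one inner cusp for a region $R$ changes the gleam of $R$ by a fixed nonzero half-integer (and leaves the rest of the polyhedron essentially untouched), while the same cusp, viewed as an outer cusp for the region on the opposite side of the edge, contributes a complementary change. With this local move in hand, one produces an algorithm: traverse the regions of $P_{DL}$, and in each region insert precisely as many inner cusps as are required to absorb the discrepancy $gl_{1}(R)-gl_{2}(R)$; this gives a divide with cusps $P$ obtained from $P_{DL}$ by adding only finitely many cusps, with $gl_{2}$ upgraded to $gl_{1}$ and hence $L_{P}\simeq L$.

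The main obstacle will be the realizability of the discrepancy $gl_{1}-gl_{2}$ by cusps. Because each cusp affects two neighbouring regions at once, and gleams are half-integer valued, one must rule out a parity or global obstruction: I would verify that $gl_{1}$ and $gl_{2}$ agree modulo the appropriate lattice generated by the cusp moves, and that any residual global constraint (analogous to the sum $\overline{gl}$ appearing in Theorem \ref{theo:free_2_period}) is automatically satisfied because both gleams describe links in the same ambient manifold $S^{3}$. Once this consistency is established the algorithm terminates, and the independence of the final link on the order in which cusps are inserted follows from the locality of the gleam move. Combined with the shadow-theoretic identification of $(P_{DL}, gl_{1})$ with $L$, this yields the desired divide with cusps.
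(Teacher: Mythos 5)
The statement you are asked to prove is Theorem \ref{theo:sugawara}, which is quoted from Sugawara \cite{S24}; the present paper does not prove it but uses it as input. What you propose is, in substance, the strategy of the paper's own Theorem \ref{theo:main} (via Propositions \ref{prop:transvergent_diagrams_to_divides_with_gleams} and \ref{prop:divides_with_gleams_to_divides_with_cusps}): read a gleam $gl_{1}$ off the transvergent diagram, compare it with the canonical gleam $gl_{2}$ of the underlying divide, and realize the discrepancy by inserting cusps. Carried out, this does give a shadow-theoretic proof of the existence direction of Sugawara's theorem, genuinely different from Sugawara's original argument. Two caveats on the part you do treat. First, the local contributions are not uniformly $\pm\tfrac{1}{2}$ at four regions: only crossings of $DL$ on the symmetry axis behave that way, while off-axis crossings contribute $\pm 1$ to a single region of $P_{DL}$, and a single cusp changes the gleams of the two adjacent regions by $+1$ and $-1$ (Proposition \ref{prop:cusp_canonical_gleam}); you must therefore check that $gl_{1}(R)-gl_{2}(R)$ is an integer on every internal region. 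Second, ``insert in each region as many cusps as needed'' cannot be done region by region precisely because each cusp affects two regions at once; the paper resolves this by pushing the discrepancies along a spanning tree of the dual graph rooted at an external region, which carries no gleam and hence absorbs the leftover. Your appeal to a global constraint being ``automatically satisfied'' is not a substitute for exhibiting this mechanism.

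The genuine gap is that your proposal addresses only part of the statement. Theorem \ref{theo:sugawara} is an equivalence together with two refinements. The ``if'' direction (the link of a divide with cusps is symmetric) is easy, since $F_{P}$ is invariant under $\iota(x,u)=(x,-u)$ and $\iota|_{S^{3}}$ has nonempty fixed point set, but it should be recorded. More seriously, the ``furthermore'' clauses are untouched: when $L$ is strongly invertible you must show the construction can be arranged so that $P$ has only interval components and circle components carrying an even number of cusps, and when $L$ is $2$-periodic that it has only circle components. In your framework this requires analyzing how the components of the transvergent diagram meet the symmetry axis (components meeting the axis descend to intervals in $D=S^{2}/\rho$, while axis-disjoint components come in mirror pairs descending to circles) and controlling the parity of the number of cusps your algorithm places on each circle component; neither follows from the gleam bookkeeping alone.
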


\begin{example}
Let $P$ be the divide with cusps shown in Figure \ref{figure:example_divide_with_cusps}.
Then, the link $L_{P}$ of $P$ is the figure-eight knot $4_1$ (see Sugawara \cite{S24}). 
\begin{figure}[htbp]
\centering\includegraphics[width=13cm]{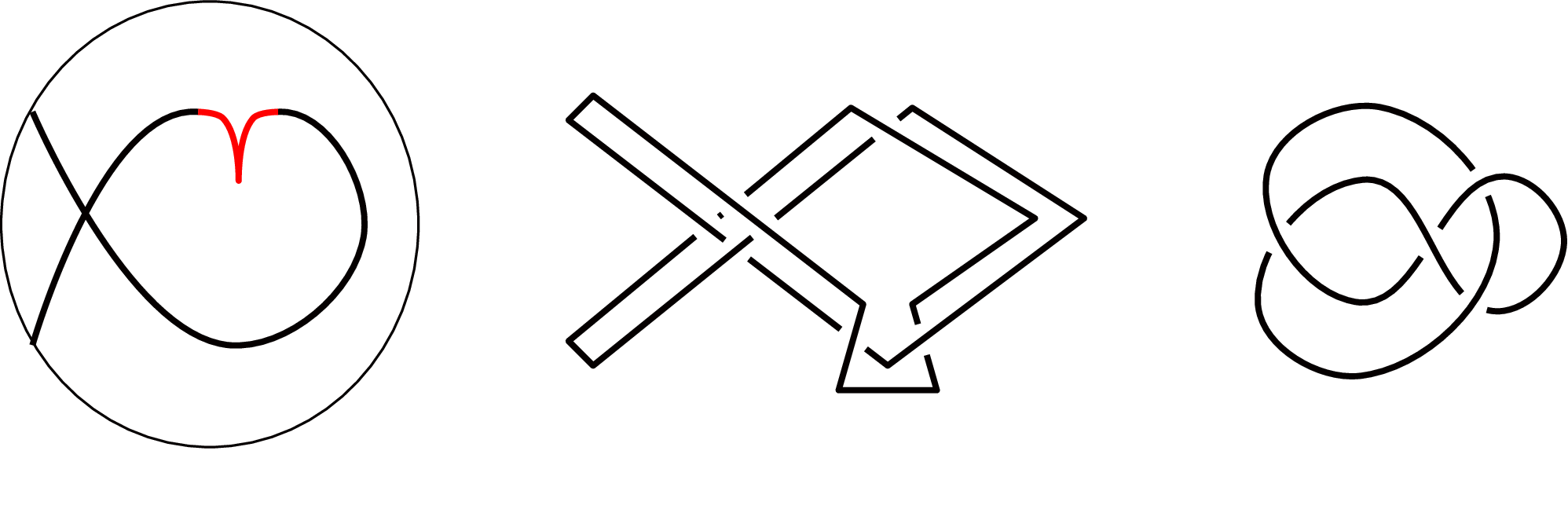}
\caption{The divide with cusps $P$ and the link $L_P$ of $P$.}
\label{figure:example_divide_with_cusps}
\begin{picture}(400,0)(0,0)
\put(285,97){$\approx$}
\put(60,35){$P$}
\put(210,35){$L_{P}$}
\end{picture}
\end{figure}
\end{example}

\subsection{Shadows}
\label{subsection:shadows}

\begin{definition}
A compact space $X$ is called a {\it simple polyhedron} if a regular neighborhood of each point in $X$ is homeomorphic to one of the five local models shown in Figure \ref{figure:local_model_of_simple_polyhedron}. 
\begin{figure}[htbp]
\centering\includegraphics[width=13cm]{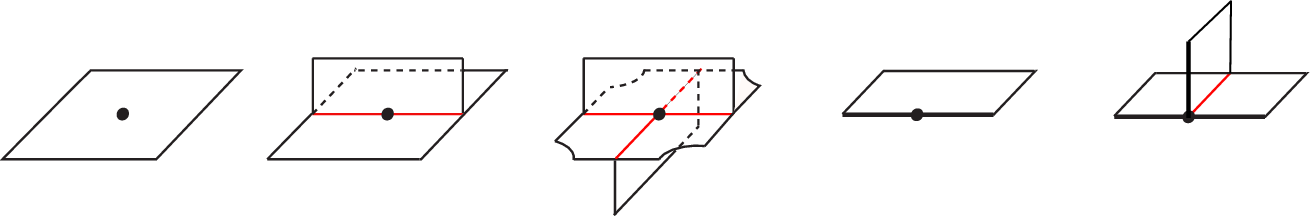}
\begin{picture}(400,0)(0,0)
\put(40,0){(i)}
\put(110,0){(ii)}
\put(186,0){(iii)}
\put(265,0){(iv)}
\put(342,0){(v)}
\end{picture}
\caption{The local models of a simple polyhedron.}
\label{figure:local_model_of_simple_polyhedron}
\end{figure}
A point $x \in X$ whose regular neighborhood is homeomorphic to the model $\mathrm{(iii)}$ is called a {\it vertex} of $X$. 
The set of points whose regular neighborhoods are homeomorphic to the models $\mathrm{(ii)}$ or $\mathrm{(iii)}$ is called the {\it singular set} of $X$ and is denoted by $SX$. 
The set of points whose regular neighborhoods are homeomorphic to the models $\mathrm{(iv)}$ or $\mathrm{(v)}$ is called the {\it boundary} of $X$ and is denoted by $\partial X$. 
A connected component of $X \setminus SX$ is called a {\it region} of $X$.
A region of $X$ that has no intersection with $\partial X$ is called an {\it internal region} of $X$. 
Otherwise, a region is called an {\it external region} of $X$. 
\end{definition}

\begin{definition}
Let $M$ be an orientable closed $3$-manifold and $G$ be a trivalent graph in $M$.  
A simple polyhedron $X$ properly embedded in a compact oriented $4$-manifold $W$ is called a {\it shadow} of $(M, G)$ (or simply a {\it shadow} of $M$) if it satisfies the following conditions:  
\begin{enumerate}
\item $W$ collapses onto $X$.
\item $X$ is locally flat in $W$, i.e., for every point $x$ in $X$, there exists a local coordinate neighborhood $(U, \varphi)$ of $x$ in $W$ such that $\varphi(U \cap X) \subset \mathbb{R}^{3} \subset \mathbb{R}^{4}$.
\item $(M, G) = (\partial W, \partial X)$.
\end{enumerate}
\end{definition}

It is well known that any pair of a closed $3$-manifold and a trivalent graph in it has a shadow (Turaev \cite{Tur94}).

A {\it gleam} $gl$ on a simple polyhedron $X$ is a map from the set of internal regions of $X$ to the set $\frac{1}{2} \ZZ$ of half-integers satisfying a certain condition (Turaev \cite{Tur94}). A simple polyhedron equipped with a gleam is called a {\it shadowed polyhedron}.
\begin{theorem}[Turaev \cite{Tur94}]
\label{theo:turaev_reconstruction}
\begin{itemize}
\item[(i)] For a shadowed polyhedron $(X, gl)$, there exists a canonical way to uniquely construct, up to diffeomorphism, a compact oriented $4$-manifold $W_X$ and an embedding $X \hookrightarrow W_X$ so that $X \subset W_X$ is a shadow of $\partial W_X$.
\item[(ii)] If a simple polyhedron $X$ embedded in a compact oriented $4$-manifold $W$ is a shadow of $(\partial W, \partial X)$, then $X$ is canonically assigned a gleam $gl$ such that $W$ is reconstructed by applying the way of $(\mathrm{i})$ to the shadowed polyhedron $(X, gl)$.
\end{itemize}
\end{theorem}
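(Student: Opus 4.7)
The plan is to prove (i) and (ii) in parallel by giving a canonical local-to-global construction and then inverting it. The starting point is that each of the five local models of Figure \ref{figure:local_model_of_simple_polyhedron} admits a standard locally flat thickening in $\RR^4$, and this thickening is unique up to orientation-preserving diffeomorphism rel the lower strata because $\RR^4$ is simply connected and the gluing data involved are trivial framed bundles: models (i) and (iv) thicken to $D^2 \times D^2$ (the latter with a corner at the boundary), model (ii) and its boundary version (v) thicken to the standard regular neighborhood of three $2$-disks meeting along a common $1$-disk in $\RR^4$, and model (iii) thickens to a specific $4$-ball containing the cone on the $1$-skeleton of a regular tetrahedron.

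For (i), given a shadowed polyhedron $(X, gl)$, first glue the local thickenings along a regular neighborhood $N(SX)$ of the singular set, using the canonical identifications forced by the local models. This yields an oriented $4$-manifold $N$ with $N(SX) \subset N$. For each internal region $R$ of $X$, the boundary $\partial R$ lies in $SX$, and along each triple edge of $\partial R$ there are exactly two candidate local framings of the normal $D^2$-bundle of $R$ in $N$, differing by a half twist (the $\ZZ/2$-ambiguity coming from interchanging the two local half-planes opposite to $R$). Consequently the relative Euler class of the normal $D^2$-bundle of $R$ is well-defined in $\frac{1}{2}\ZZ$, and this is precisely the data prescribed by $gl(R)$. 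Attach the corresponding twisted $D^2$-bundle over $R$ for every internal region to obtain the desired $W_X$. Collapsing each $D^2$-fiber to its center realizes the collapse $W_X \searrow X$, verifying the three shadow conditions.

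For (ii), given a locally flat shadow $X \subset W$ with $W$ collapsing onto $X$, recover the gleam as follows. Local flatness lets us identify a neighborhood of every point of $X$ with one of the standard local thickenings above, which canonically identifies a regular neighborhood of $N(SX)$ in $W$ with the corresponding piece constructed in (i). For each internal region $R$, define $gl(R) \in \frac{1}{2}\ZZ$ to be the obstruction to extending the canonical (up-to-sign) framing along $\partial R$ to a nowhere-zero section of the normal $D^2$-bundle of $R$ in $W$. Plugging this gleam into the construction of (i) reproduces $W$ stratum by stratum, since both $W$ and $W_X$ are built by attaching the same twisted $D^2$-bundle over each $R$ to the same canonical thickening of $N(SX)$.

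The main obstacle will be to make precise the half-integer behavior at triple edges and vertices. Specifically, one must verify that the pair of candidate framings along any triple edge differs by exactly a half twist, so that the averaged Euler obstruction is genuinely half-integral, and that the resulting local contributions assemble coherently around each vertex to give a well-defined global $\frac{1}{2}\ZZ$-valued gleam that uniquely determines the gluing up to diffeomorphism. A secondary technical point is fixing orientation conventions so that the orientation of $W_X$ is canonically determined by the orientations of the regions together with the gleam.
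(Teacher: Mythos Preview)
The paper does not prove this theorem; it is quoted from Turaev \cite{Tur94} and stated without proof. What the paper does supply, immediately after the statement, is a brief description (attributed to Martelli \cite{M11}) of how the gleam in part~(ii) is assigned: for each internal region $R$ one removes a neighborhood of the singular set, obtains a $D^2$-bundle $W_R$ over the truncated region $\overline{R}$, and reads off $gl(R)$ from the homology class of an $S^0$-bundle over $\partial\overline{R}$ on the boundary tori of $\partial_h W_R$, distinguishing the trivial and non-trivial cases to get integer or half-integer contributions. There is no proof of part~(i) in the paper, nor any argument that the construction in~(i) recovers $W$ from the gleam in~(ii).

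Your sketch is a reasonable outline of the standard Turaev reconstruction, and your description of~(ii) --- the gleam as a half-integral relative Euler obstruction coming from a sign ambiguity in the framing along triple edges --- is essentially equivalent to the paper's $S^0$-bundle formulation, just phrased in obstruction-theoretic rather than homological language. Since the paper offers no proof to compare against, the relevant observation is simply that your outline is consistent with the descriptive account the paper gives for~(ii), and that the gaps you yourself flag (the precise half-twist computation at triple edges, coherence at vertices, orientation conventions) are exactly the substantive content one would need to fill in to turn the sketch into a proof; the paper defers all of that to \cite{Tur94} and \cite{M11}.
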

\begin{definition}
The canonical way of constructing $W_X$ from $X$ in (i) of Theorem \ref{theo:turaev_reconstruction} 
is called {\it Turaev's reconstruction}. 
\end{definition}
We briefly explain (ii) of Theorem \ref{theo:turaev_reconstruction}, that is, how a gleam $gl$ is canonically assigned to a shadow $X$ of $(\partial W,\partial X)$. The following description is based on Martelli \cite{M11}. 
Let $R$ be an internal region of $X$ and $R'$ a compact surface whose interior is homeomorphic to $R$. 
Let $\tau : R' \to X$ be the continuous map that is an extension of the inclusion $R  \hookrightarrow X$.
For simplicity, we assume that $\tau$ is injective. 
For the assignment of gleams in the general case, see e.g. \cite{CM17, CT08, M11}. 
For a point $x \in SX$ of the singular set $SX$ of $X$, let $U_x$ be a neighborhood of $x$ in $W$ that gives the local flatness of the shadow $X \subset W$. 
Since $SX$ is compact, there exists a finite number of points $x_1, x_2, \ldots , x_k$ such that $SX \subset \bigcup_i U_{x_i}$. 
Then, we set $\overline{R} :=(X \setminus \bigcup_i \Int U_{x_i}) \cap R$. 
Let $\pi : W \to X$ be the projection obtained from the collapsing $W \searrow X$. 
We may assume without loss of generality that $W_{R} := \pi^{-1}(\overline{R})$ is a $D^{2}$-bundle over $\overline{R}$.  
Then, the boundary of $W_{R}$ can be decomposed as $\partial W_{R} = \partial_{h}W_{R} \cup \partial_{v}W_{R}$,  
where $\partial_{h}W_{R}$ is an $S^{1}$-bundle over $\overline{R}$ and $\partial_{v}W_{R}$ is a $D^{2}$-bundle over $\partial \overline{R}$.  
We give the orientation $\partial_{h}W_{R}$ induced by the orientation of the boundary of $W_{R}$. 
We fix a section $s$ of the $S^{1}$-bundle $\partial_{h}W_{R}$ over $\overline{R}$ and an orientation of the $S^1$-fiber. 
For each boundary torus $T_{i}$ of $\partial_{h}W_{R}$, we take a meridian and longitude $(\mu_{i}, \lambda_{i})$ of $T_{i}$, where $\mu_{i}$ is a boundary component of the section $s$ and $\lambda_{i}$ is the $S^{1}$-fiber of $\partial_{h}W_{R}$. 
Then, we give the orientation of  $\lambda_{i}$ induced by the orientation of the $S^1$-fiber, and the orientation of $\mu_{i}$ in such a way that $(\mu_{i}, \lambda_{i})$ defines a positive homology basis of $T_{i}$.

In the above setting, the gleam $gl(R) \in \frac{1}{2}\ZZ$ is assigned to the region $R$ as follows. 
Let $r_{i}$ be a component of $\partial \overline{R}$
and $T_{i}$ be the boundary torus of $\partial_{h}W_{R}$ corresponding to $r_{i}$.
We take a point $x$ on $r_{i}$.  
Let $V_x$ be a neighborhood of $x$ in $W$ that satisfies $V_x \subset U_x$. 
Since the neighborhood $U_x$ gives the locally flatness of the shadow $X \subset W$, 
there exists a $3$-ball $D_{x}^3$ centered at $x$ such that $V_x \cap X \subset D_{x}^3$. 
Then, we take the intersection $D_{x}^1 \subset W$ of the $3$-ball $D_{x}^{3}$ and the fiber $D^2 = \pi^{-1}(x)$ over $x$, which is an interval in $W$ centered at $x$. 
By taking such intervals for each point of $r_{i}$, we define a $D^{1}$-bundle over $r_{i}$.  
Then, we defines an $S^{0}$-bundle over $r_{i}$ by taking the boundaries of the fibers of the $D^{1}$-bundle over $r_{i}$. 
If the $S^{0}$-bundle is trivial, it consists of two parallel simple closed curves on $T_i$ expressed as $\mu_{i} + n \lambda_{i}$ for some integer $n \in \mathbb{Z}$.  
In this case, we set $g_{i} := n$.  
If the $S^{0}$-bundle is non-trivial, it consists of a single simple closed curve on $T_{i}$ expressed as $2\mu_{i} + n\lambda_{i}$ for some integer $n \in \mathbb{Z}$. 
In this case, we set $g_{i} := \frac{n}{2}$. 
For all boundary components $r_{i}$ of $\overline{R}$, we define a half-integer $g_{i}$ as above. Then, we assign the gleam $gl(R)$ to $R$ by setting $gl(R) := \sum_{i} g_{i}$.

\begin{example}[Turaev \cite{Tur94}, Costantino-Thurston \cite{CT08}]
\label{ex:shadows_of_DG}

Let $G$ be a trivalent graph in $S^{3}$ and $DG$ be its diagram.  
Then, a shadow of $(S^{3}, G)$ can be constructed from the diagram $DG$ as follows. 
We regard $S^{3}$ as the boundary of $D^{4}$. 
Let $\pi : S^{3} \to D$ be the projection obtained by restricting the collapsing $D^{4} \searrow D$ to $S^{3}=\partial D^{4}$. 
We set $D' := D - \Int \Nbd(\partial D; D)$. 
Then, $\pi^{-1}(D')$ is homeomorphic to the solid torus $S^{1} \times D'$.  
We may assume by isotopy so that $G$ lies in a sufficiently small neighborhood $[-\varepsilon, \varepsilon] \times D'$ of $\{0\} \times D'$, and that $\pi$ is generic with respect to $G$.  
Let $X_{DG}$ be the mapping cylinder of the projection $\pi$:
$$X_{DG} := (G \times [-1, 1]) \sqcup D / (x, 0) \sim \pi(x).$$
We can naturally regard $X_{DG}$ as a simple polyhedron in $D^{4}$. Then, $X_{DG}$ is a shadow of $(S^{3}, G)$. 
The local contribution to the gleam of $X_G$ around a crossing of the diagram $DG$ is as shown in Figure \ref{figure:local_contribution_Turaev}.
The gleam of the shadow $X_{DG}$ is given by the sum of these local contributions around the crossings of $DG$.
\begin{figure}[htbp]
\centering\includegraphics[width=3cm]{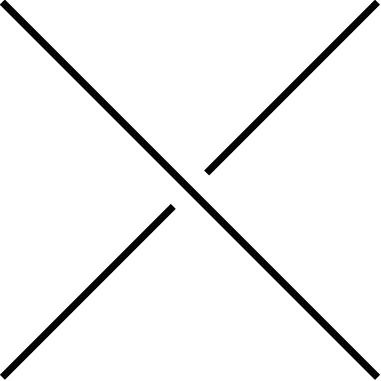}
\caption{The local contribution around a crossing of the diagram $DG$.}
\label{figure:local_contribution_Turaev}
\begin{picture}(400,0)(0,0)
\put(155,75){$-\frac{1}{2}$}
\put(225,75){$-\frac{1}{2}$}
\put(196,105){$\frac{1}{2}$}
\put(196,40){$\frac{1}{2}$}
\end{picture}
\end{figure}
\end{example}

\subsection{Divides with gleams}
\label{subsection:divides_with_gleams}
In this section, we extend the notion of gleams from a shadowed polyhedron to a A'Campo's divide and define a {\it divide with gleams}, which is a divide whose regions are equipped with gleams. 
Let $P$ be a divide. We set $F_{P} := \{(x, u) \in D^4 \mid x \in P, u \in T_{x}P \}$, which is an immersed surface in $D^{4}$ consisting of disk or annulus regions. It has self-intersections only at the double points of $P$, and its boundary is the link $L_P$ of the divide $P$. Then, we define $X_{P} := F_{P} \cup D$. 
Let $D^{4} / \iota$ be the quotient space of induced by the involution $\iota(x,u) = (x,-u)$ of $D^4$. 
Let $q : D^{4} \to D^{4} / \iota$ be the quotient map of $D^{4} / \iota$. 
Note that $D^{4} / \iota$ is homeomorphic to $D^{4}$.
We identify $D^{4} / \iota$ with $D^4$. 
Then, the quotient map $q : D^{4} \to D^{4}/\iota \approx D^4$ is the double branched covering branched along the disk $D$ and can be expressed as \( q(x, re^{\theta i}) = (x, re^{2\theta i}) \). 
Furthermore, the restriction map $q|_{S^{3}} : S^{3} \to S^{3}$ is the double branched covering branched along the boundary $\partial D$ of the disk $D$. 
Then, we define $Y_{P} := q(X_{P})$ and $G_{P} := q(L_{P} \cup \partial D)$. 
Since the link $L_P$ either does not intersect $\partial D$ or intersects it transversely, the image $G_{P}$ is a trivalent graph in $S^{3}$.
Then, the following holds.

\begin{proposition}
\label{prop:YP_is_shadow}
Let $P$ be a divide. 
Then, the polyhedron \( Y_{P} \subset D^{4} \) is a shadow of \( (S^{3}, G_{P}) \).
\end{proposition}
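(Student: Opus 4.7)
The plan is to verify the three defining conditions of a shadow in turn, performing all computations $\iota$-equivariantly in $N\Sigma_{g,n}$ and descending via the quotient map $q$. First I would check that $Y_P$ is a simple polyhedron by a point-by-point analysis of its local models. Then I would verify that the embedding $Y_P\hookrightarrow N\Sigma_{g,n}/\iota$ is locally flat. Finally I would construct an $\iota$-equivariant collapse $N\Sigma_{g,n}\searrow X_P$ and descend it to the required collapse $N\Sigma_{g,n}/\iota\searrow Y_P$. The boundary identification $(\partial(N\Sigma_{g,n}/\iota),G_P)=(\partial W,\partial Y_P)$ will then follow by construction.

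For the local model check, pick coordinates $(x_1,x_2)$ on $\Sigma_{g,n}$ and fiber coordinates $(u_1,u_2)$ on $T\Sigma_{g,n}$ so that $\iota(u_1,u_2)=(-u_1,-u_2)$. Near a smooth interior point of $P$, arrange $P=\{x_2=0\}$, so $F_P=\{x_2=u_2=0\}$ and $\Sigma_{g,n}=\{u_1=u_2=0\}$; thus $X_P$ is a pair of $2$-disks meeting along $P$, and $q$ folds $F_P$ into a half-disk attached to $\Sigma_{g,n}$ along $P$, producing the triple-line model (ii). Near a double point of $P$, the two tangent lines give two components of $F_P$ meeting only at the fiber origin; together with the four quadrants of $\Sigma_{g,n}$ separated by $P$, this yields four triple-line edges meeting at $q(p)$ carrying six sheets, i.e.\ the vertex model (iii). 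At endpoints of $P$ on $\partial\Sigma_{g,n}$ one obtains the boundary model (iv), and at generic points one gets model (i). For local flatness, I would use the $\iota$-invariant smooth coordinates on $N\Sigma_{g,n}/\iota$ coming from the invariants of $(u_1,u_2)$ --- for instance $(x_1,x_2,u_1^2-u_2^2,\,2u_1u_2)$ near an interior smooth point of $P$ --- in which $Y_P$ locally sits inside a single $\RR^3$-slice.

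For the collapse I plan to first construct an $\iota$-equivariant collapse $N\Sigma_{g,n}\searrow X_P$. Decompose $\Sigma_{g,n}$ into a small tubular neighborhood $N(P)$ of $P$ and the complementary closed subregions of $\Sigma_{g,n}\setminus P$. Over each complementary subregion the $D^2$-bundle retracts radially onto its zero section, and this retraction is manifestly $\iota$-equivariant. On $N(P)$ away from double points and from $\partial\Sigma_{g,n}$, the coordinates above let me first push $u_2$ to zero (equivariant since $\iota$ reverses $u_2$) and then apply a standard equivariant elementary collapse of the remaining $(x_2,u_1)$-disk onto its pair of axes, which is exactly $F_P\cup\Sigma_{g,n}$ locally. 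Near double points and near intersections of $P$ with $\partial\Sigma_{g,n}$, I would use analogous explicit equivariant collapses on $4$-ball charts. Each equivariant elementary collapse descends through $q$ to an elementary collapse downstairs, so assembling these gives $N\Sigma_{g,n}/\iota\searrow Y_P$. The boundary claim is then tautological: $\partial X_P = X_P\cap\partial N\Sigma_{g,n} = L_P\cup\partial\Sigma_{g,n}$, and since the branch locus $\Sigma_{g,n}$ of $q$ lies in the interior of $N\Sigma_{g,n}$, $q$ restricts to a surjection $\partial N\Sigma_{g,n}\to\partial(N\Sigma_{g,n}/\iota)$, whence $\partial Y_P = q(\partial X_P) = G_P$.

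I expect the main obstacle to be the coherent assembly of the equivariant collapse across the singular strata, particularly at the double points of $P$ where $F_P$ has two branches meeting at a single point; this is where the equivariance of each elementary collapse and its correct descent through $q$ demand the most care, while the remaining arguments are essentially local and direct.
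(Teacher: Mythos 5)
Your proposal is correct and follows essentially the same route as the paper: one checks that the local models of $X_P$ (at smooth points, double points, and boundary points of $P$) descend under the fold $q(x,re^{i\theta})=(x,re^{2i\theta})$ to the five local models of a simple polyhedron sitting inside a $3$-dimensional slice, and that the obvious collapse $N\Sigma_{g,n}\searrow X_P$ descends through $q$ to a collapse onto $Y_P$. The paper carries out the local-model and flatness check by pictures where you use explicit invariant coordinates, but the argument is the same.
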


\begin{proof}
It is clear that $D^{4}$ collapses to $X_{P}$.
This collapse $D^{4} \searrow X_{P}$ and the double branched covering $q : D^{4} \to D^{4}$
implies that $D^{4} = q(D^4)$ collapses to $Y_{P}$. 
Furthermore, by the definition of $Y_P$, the boundary of $Y_P$ is the trivalent graph $G_P$.
Thus, it is sufficient to show that $Y_{P}$ is a locally flat simple polyhedron in $D^{4}$. 
We take a point $c \in X_{P}$. 
By the definition of $X_{P}$, a neighborhood $\Nbd(c ; X_{P})$ of $c$ is homeomorphic to one of the local models in Figure \ref{figure:local_models_XP}. 
\begin{figure}[htbp]
\centering\includegraphics[width=11cm]{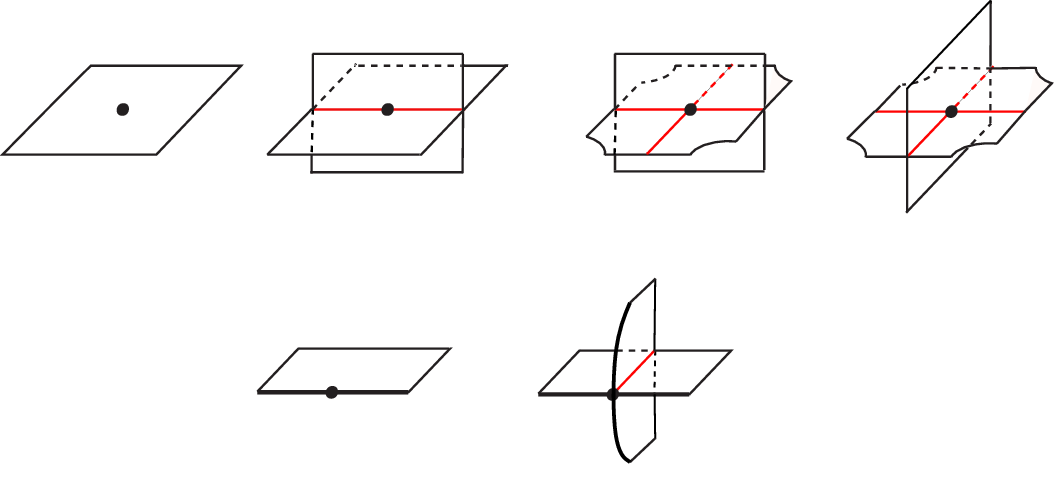}
\caption{The local models of $X_P$.}
\label{figure:local_models_XP}
\begin{picture}(400,0)(0,0)
\put(282,145){$\cup$}

\put(63,107){(i)}
\put(148, 107){(ii)}
\put(275,107){(iii)}
\put(135,30){(iv)}
\put(228,30){(v)}
\end{picture}
\end{figure} 
In what follows, we show that the images under $q$ of the neighborhood models in Figure \ref{figure:local_models_XP} coincide with the neighborhood models of a simple polyhedron shown in Figure \ref{figure:local_model_of_simple_polyhedron}. 
The restriction of the double branched covering $q : D^4 \to D^4, (x,re^{i\theta}) \mapsto (x,re^{2i\theta})$, 
to the $2$-disk $D \subset D^4$ gives the identity map $q|_{D} : D \to D, (x,0) \mapsto (x,0)$. 
Therefore, the images under $q$ of the models in Figure \ref{figure:local_models_XP} (i) and (iv), which are subsets of the $2$--disk $D$, are homeomorphic to the models in Figure \ref{figure:local_model_of_simple_polyhedron} (i) and (iv), respectively. 
We focus on the remaining models. 
The top row of Figure~\ref{figure:local_image_Imq} depicts the local models (ii), (iii), and (v) of $X_P$.
The vectors in this figure represent the subset $F_P=\{(x,u)\in D^4 \mid x\in P,\; u\in T_xP\}$ of $X_P=F_P\cup D$. 
These vectors consist of the two opposite tangent vectors based at each point of the divide $P$. 
Since the branched covering map $q$ is given by $q(x,re^{i\theta})=(x,re^{2i\theta})$,
the images of these two opposite vectors coincide and become a single vector.
Consequently, the image $q(\mathrm{Nbd}(c;X_P))$ of the neighborhood $\mathrm{Nbd}(c;X_P)$ is depicted in the bottom row of Figure \ref{figure:local_image_Imq}, and it coincides with the local models (ii), (iii), and (v) in Figure \ref{figure:local_model_of_simple_polyhedron}.
\begin{figure}[htbp]
\centering\includegraphics[width=12cm]{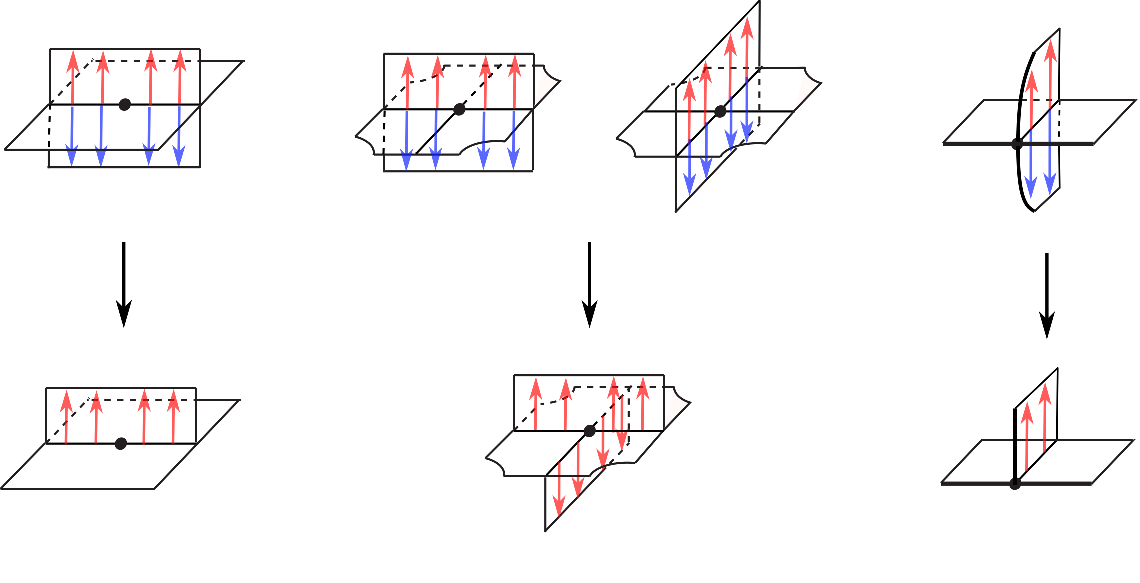}
\caption{The images of the local models (ii), (iii), and (v) of $X_P$ under the double covering $q$ of $D^4$ branched along the disk $D$.}
\label{figure:local_image_Imq}
\begin{picture}(400,0)(0,0)
\put(45,135){$q$}
\put(180,135){$q$}
\put(320,135){$q$}
\put(200,180){$\cup$}
\put(55,43){(ii)}
\put(190,43){(iii)}
\put(330,43){(v)}
\end{picture}
\end{figure}
\end{proof}

\begin{definition}
The trivalent graph $G_P \subset S^3$ is called the {\it trivalent graph of the divide $P$}, and the shadow $Y_P$ of $(S^3, G_P)$ is called the {\it shadow of $(S^3, G_P)$ associated with the divide $P$}. 
Furthermore, the simple polyhedron that is homeomorphic to $Y_P$ is called the {\it simple polyhedron of $P$}.
\end{definition}

For a divide $P$, there exists a natural one-to-one correspondence between the set of internal regions of $P$, $X_{P}$ and $Y_{P}$.
Therefore, unless otherwise stated, we identify an internal region of $P$ with that of $X_{P}$, and $Y_{P}$.

\begin{definition}
Let $P$ be a divide and $gl$ be a map from the set of internal regions of \( P \) to the set \( \frac{1}{2} \mathbb{Z} \) of half-integers.  
Then, the pair $(P, gl)$ is called a {\it divide with gleams} if the pair $(Y_{P}, gl)$ is a shadowed polyhedron. 
Here, $Y_P$ is the simple polyhedron of $P$ and $gl$ is regarded as a map from the set of internal regions of $Y_{P}$ to $\frac{1}{2}\mathbb{Z}$.
\end{definition}



A divide with gleams $(P, gl)$ defines a link $L_{(P,gl)}$ in $S^{3}$ as follows:
Since $Y_{P}$ is contractible, the $4$-manifold obtained from the shadowed polyhedron $(Y_{P}, gl)$ by Turaev's reconstruction is diffeomorphic to the $4$-ball $D^{4}$. Therefore, $Y_P$ can be embedded in $D^4$ so that $Y_P$ is a shadow of $S^3$.
Let $G_{(P, gl)}$ denote the trivalent graph in $S^3$ given by the boundary of the shadow $Y_P$. 
Then, we define a link $L_{(P, gl)}$ in $S^{3}$ as the link obtained by pulling back the trivalent graph $G_{(P,gl)}$ in $S^3$ via the double branched covering $q|_{S^3} : S^3 \to S^3$ branched along $\partial D$.

\begin{definition}
For a divide with gleams $(P, gl)$, 
the trivalent graph $G_{(P,gl)}$ and the link \( L_{(P, gl)} \) in \( S^{3} \) defined above are called 
the {\it trivalent graph of the divide with gleams \( (P, gl) \)} and the {\it link of the divide with gleams \( (P, gl) \)}, respectively.
\end{definition}

As shown in the next proposition, one can assign a suitable gleam $gl$ to a given divide $P$ so that the link of the divide with gleams $(P,gl)$ coincides with the link $L_P$ of the divide $P$.

\begin{proposition}
\label{prop:canonical_gleam}
Let $P$ be a divide. Then, there exists a divide with gleams \((P', gl)\) satisfying the following conditions:
\begin{itemize}
\item \(P' = P\),
\item For any internal region \(R\) of $P$, \(gl(R) = \frac{4-n}{2} \in \frac{1}{2}\mathbb{Z}\). Here, \(n\) is the number of times the boundary of the closure of $R$ passes through double points of $P$.
\item $L_{(P', gl)}$ is equivalent to  $L_{P}$.
\end{itemize}
\end{proposition}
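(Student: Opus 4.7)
The plan is to invoke Theorem \ref{theo:turaev_reconstruction}(ii) for the shadow $Y_P$ given by Proposition \ref{prop:YP_is_shadow}, and to show that the canonical gleam it produces is exactly $\frac{4-n}{2}$. Set $P' := P$ and let $gl$ be the canonical gleam on $Y_P \subset N\Sigma_{0,1}/\iota \cong D^4$ from Theorem \ref{theo:turaev_reconstruction}(ii). Then $(Y_P, gl)$ is a shadowed polyhedron, so $(P', gl)$ is a divide with gleams by the case where $\Sigma_{g,n}$ has boundary. By the uniqueness in Theorem \ref{theo:turaev_reconstruction}, Turaev's reconstruction of $(Y_P, gl)$ recovers the original embedding $Y_P \hookrightarrow N\Sigma_{0,1}/\iota$, so $W_P \cong N\Sigma_{0,1}/\iota$, the branched double cover $q_P$ agrees with $q$, and the trivalent graph $G_{(P',gl)}$ coincides with $G_P = q(L_P \cup \partial\Sigma)$. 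Pulling back by $q$ recovers $L_P$ (alongside the fixed axis $\partial\Sigma$ of $\iota$), so $L_{(P',gl)}$ is equivalent to $L_P$, which establishes the third bullet.

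To verify the second bullet, I would compute $gl(R)$ via the explicit recipe described after Theorem \ref{theo:turaev_reconstruction}. For each internal region $R$, choose a section $s$ of the $S^1$-bundle $\partial_h W_R$ over $\overline{R}$ coming from the tangent framing of $D$ under the quotient $q$, orient the $S^1$-fibers accordingly, and compute each $g_i$ from the $S^0$-bundle over the corresponding boundary arc of $\partial\overline{R}$. The contributions split into two types: arcs of $\partial\overline{R}$ on triple lines of $Y_P$, coming from edges of $P$ adjacent to $R$, and arcs inside neighborhoods $U_v$ of the $n$ vertices of $Y_P$ on $\partial R$. Around each vertex, the local model of $Y_P$ is the quotient by $\iota$ of two transversely crossing fins of $F_P$ together with $\Sigma$; a direct check shows that the induced $S^0$-bundle on the small arc is non-trivial, expressible in meridian-longitude coordinates as $2\mu_i - \lambda_i$, and therefore contributes $g_i = -\frac{1}{2}$. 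The $n$ vertex arcs contribute $-\frac{n}{2}$ in total, while the triple-line arcs supply the remaining constant $+2$, which I would identify with the full turn of the tangent framing of $D$ along the simple closed curve $\partial R$. Combining yields $gl(R) = 2 - \frac{n}{2} = \frac{4-n}{2}$, as required.

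The main obstacle will be making the constant contribution $+2$ rigorous. The local computations at the vertices are routine once orientation conventions are fixed, but the global twisting must be tracked carefully as the tangent framing of $R \subset D$ is lifted to a section of $\partial_h W_R$ via the quotient by $\iota$. A useful consistency check is provided by the closed analogue: for a divide on $S^2$, Proposition \ref{prop:WP} forces $\overline{gl} = 4 - 4g = 4$, and the formula $gl(R) = \frac{4-n_R}{2}$ sums correctly to $4$ via the Euler relation $V - E + |D_P| = 2$ for the graph $P \subset S^2$ combined with $E = 2|D_P|$, which is reassuring that the formula is at least globally correct.
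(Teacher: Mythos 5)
Your proposal is correct and follows essentially the same route as the paper: both compute the canonical Turaev gleam of the shadow $Y_P$ by tracking the rotation of the tangent line field of $P$ (the fins of $X_P$) around $\partial\overline{R}$ and doubling it under the branched covering $q(x,re^{\theta i})=(x,re^{2\theta i})$. The global $+2$ that you flag as the main obstacle is handled in the paper by normalizing the region so that its edges have slope $\pm 1$ with the $n$ double points aligned in a row, whence the (doubled) field rotates by $-\pi$ along each of the $n-1$ intermediate edges and by $3\pi$ along the closing edge, giving total rotation $(4-n)\pi$ and hence $gl(R)=\frac{4-n}{2}$ directly.
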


\begin{proof}
Let $G_P$ be the trivalent graph of the divide $P$ and $Y_P$ be the shadow of $(S^3, G_P)$ associated with the divide $P$. 
If the gleam $gl$ of $Y_P$ coincides with the gleam defined in Proposition~\ref{prop:canonical_gleam}, then by Turaev's reconstruction, the trivalent graph $G_{(P,gl)}$ of the divide with gleams $(P,gl)$ is equivalent to $G_P$. 
It immediately follows that the link $L_{(P,gl)}$ of $(P, gl)$ is equivalent to $L_P$.
Therefore, in order to prove Proposition~\ref{prop:canonical_gleam}, it suffices to show that the gleam $gl$ of $Y_P$ coincides with the gleam defined in Proposition~\ref{prop:canonical_gleam}.
Let \(R\) be an internal region of \(Y_{P}\) and $n$ be the number of times the boundary of the closure of $R$ passes through vertices of $Y_{P}$, equivalently the double points of $P$. 
We now assign the gleam of $R$ according to (ii) of Theorem \ref{theo:turaev_reconstruction} explained in Section \ref{subsection:shadows}. 
That is, for the internal region $R$, we take the simple closed curve on the boundary torus $T_{R}$ of $\partial _{h}W_{R}$ defined by the $S^{0}$-bundle over $\partial \overline{R}$ and express it in terms of the homology basis $(\mu, \lambda)$ of $T_{R}$. 
Without loss of generality, 
we may assume that the region $R$ is bounded by the edges of $P$ with slope $1$ or $-1$, 
and that $\partial R$ has $n$ vertices aligned in a row. 
\begin{figure}[htbp]
\centering\includegraphics[width=7cm]{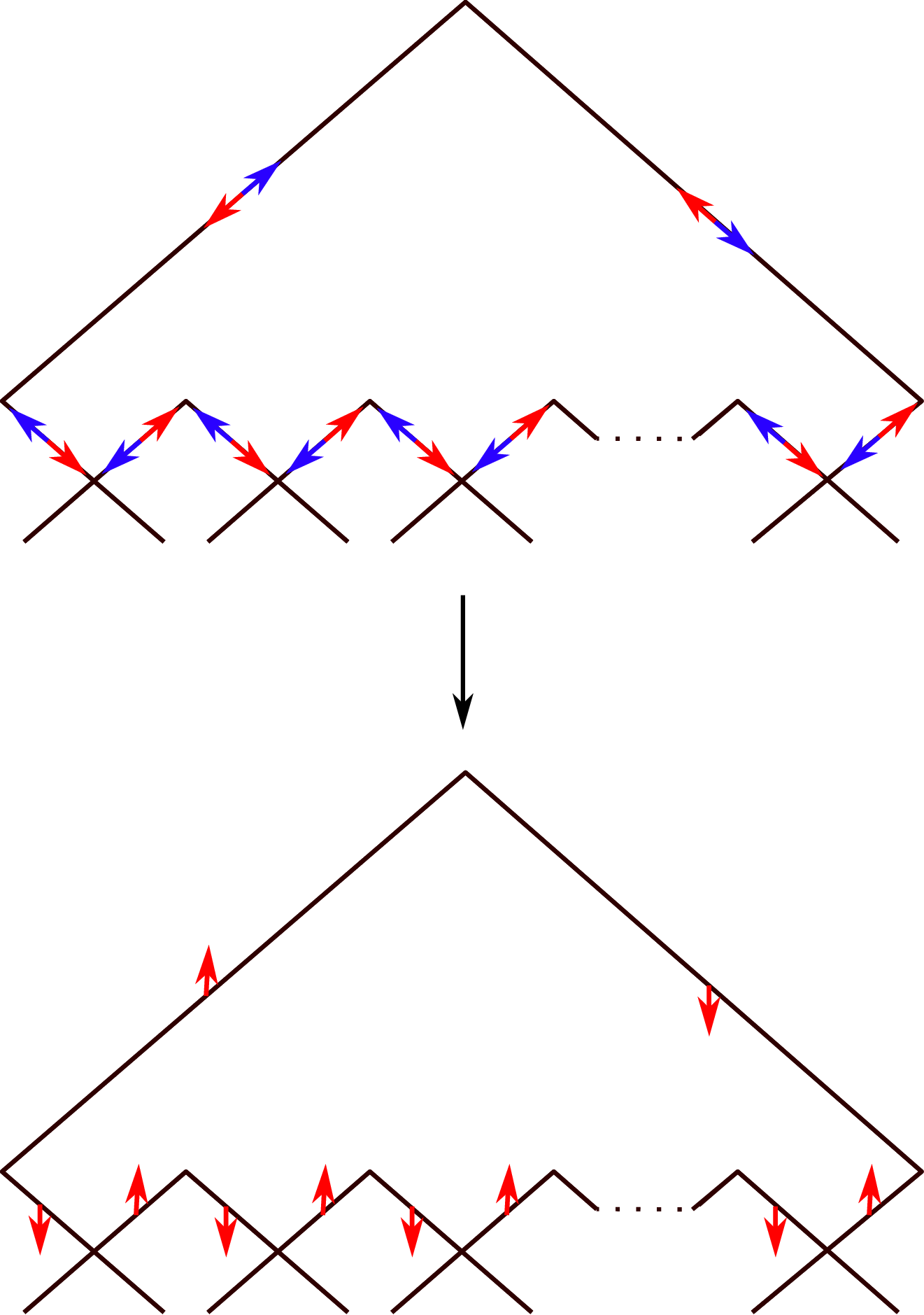}
\begin{picture}(400,0)(0,0)
\put(115,178){$c_1$}
\put(155,178){$c_2$}
\put(195,178){$c_3$}
\put(275,178){$c_n$}
\put(194,238){$R$}

\put(205,155){$q$}

\put(116,12){$c_1$}
\put(156,12){$c_2$}
\put(195,12){$c_3$}
\put(275,12){$c_n$}
\put(194,72){$R$}

\end{picture}
\caption{The neighborhood of $\Nbd(R ; X_P)$ and its image under the double branched covering $q$.}
\label{figure:canonical_gleam}
\end{figure}
Then, a regular neighborhood $\Nbd(R ; X_{P})$ of $R$ in $X_P$ can be drawn as shown in the top of Figure \ref{figure:canonical_gleam}. 
Here, the vector fields in the top of Figure~\ref{figure:convention_cusp} represents the subset $F_P=\{(x,u)\in D^4 \mid x\in P,\; u\in T_xP\}$ of $X_P=F_P\cup D$.
These vector fields consist of pairs of opposite tangent vectors at each point of the divide $P$. 
The vectors rotate by $-\frac{\pi}{2}$ along the edge from the vertex $c_i$ to $c_{i+1}$ $(i \neq n)$, and by $\frac{3\pi}{2}$ along the edge from $c_n$ to $c_1$. 
Since the double branched covering $q : D^4 \to D^4$ is given by $q(x, re^{i\theta}) = (x, re^{2i\theta})$, the images of each pair of opposite vectors coincide and become a single vector, and the image $q(\Nbd(R;X_P)) \subset Y_P$ is depicted in the bottom of Figure~\ref{figure:canonical_gleam}. 
That is, the vectors of the vector field on the bottom of Figure \ref{figure:canonical_gleam} rotate by $-\pi$ along the edge from vertex $c_{i}$ to $c_{i+1}$~$(i \neq n)$ and by $3\pi$ along the edge from $c_{n}$ to $c_{1}$. 
Therefore, by summing these rotations, we find that the vectors rotate by $(4-n)\pi$ along the boundary of the closure of $R$. 
Then, the simple closed curve on $T_{R}$ induced by the $S^{0}$-bundle over $\partial \overline{R}$ is expressed as:
\[
\begin{cases}
\mu + \big(\frac{4-n}{2}\big)\lambda & \text{if } n \in 2\mathbb{Z}, \\
2\mu + (4-n)\lambda & \text{otherwise}.
\end{cases}
\]
From the above expression, the gleam of $R$ is $\frac{4-n}{2}$. Hence, the gleam of $Y_P$ coincides with the gleam defined in Proposition \ref{prop:canonical_gleam}. 
\end{proof}

\begin{definition}
For a divide $P$, the divide with gleams that satisfies the conditions of Proposition \ref{prop:canonical_gleam} is called the {\it canonical divide with gleams of $P$} and denoted by $(P, gl_{P})$. Furthermore, the gleam $gl_{P}$ is called the {\it canonical gleam of $P$}.
\end{definition}

Following the proposition for divides, we next show that a given divide with cusps $P$ can also be equipped with the appropriate gleam so that the resulting divide with gleams has the same associated link as $L_P$. 
Before proving this, we note that the definitions of the trivalent graph $G_P$ of a divide $P$ and the shadow $Y_P$ of $(S^3, G_P)$ associated with $P$ extend naturally to divides with cusps. 
Namely, if $P$ is a divide with cusps, we define
\[
F_P := \{(x,u) \in D^4 \mid x \in P,\; u \in T_xP\}, \quad
X_P := D \cup F_P,
\]
and set
\[
G_P := q(L_P \cup \partial D), \quad
Y_P := q(X_P).
\]
Then $G_P$ is a trivalent graph in $S^3$, and
$Y_P$ is a shadow of $(S^3, G_P)$.
Similarly to the case of divides, the trivalent graph $G_P$ is called the {\it trivalent graph of the divide with cusps $P$}, and the shadow $Y_P$ is called the {\it shadow of $(S^3, G_P)$ associated with the divide with cusps $P$}.

\begin{proposition}
\label{prop:cusp_canonical_gleam}
Let $P$ be a divide with cusps. Then, there exists a divide with gleams \((P', gl)\) satisfying the following conditions:
\begin{itemize}
\item \(P'\) is the divide obtained by removing all cusps from \(P\),
\item For any internal region \(R\) of \(P\), \(gl(R) = \frac{4-n}{2} + m_{1} - m_{2} \in \frac{1}{2}\mathbb{Z}\). Here, \(n\) is the number of times the boundary of the closure of $R$ passes through double points of P, \(m_{1}\) is the number of inner cusps for $R$, and \(m_{2}\) is the number of outer cusps for $R$,
\item $L_{(P', gl)}$ is equivalent to $L_{P}$.
\end{itemize}
\end{proposition}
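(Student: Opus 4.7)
I propose to follow the strategy of Proposition~\ref{prop:canonical_gleam}. Let $P'$ be the divide obtained from $P$ by smoothing each cusp into a regular point; since this operation does not alter the double points of $P$, the divides $P$ and $P'$ share the same combinatorial structure of edges, vertices, and regions, and in particular the same simple polyhedron $Y_{P'}$ is available as the underlying polyhedron of the divide with gleams $(P', gl)$ to be constructed. In the local model from the proof of Proposition~\ref{prop:canonical_gleam} (all edges of slope $\pm 1$, the $n$ vertices on $\partial\overline{R}$ aligned in a row), the cusp-free rotation of the tangent vector field along $\partial\overline{R}$ equals $(4-n)\pi/2$ in $X_{P'}$, which doubles to $(4-n)\pi$ in $Y_{P'}$ after applying the branched covering $q$, contributing $(4-n)/2$ to the gleam.

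The new task is to measure the contribution of each cusp of $P$ lying on $\partial\overline{R}$. Near such a cusp the tangent vector of $P$ reverses direction, so it produces a $\pm\pi$ jump in the $X_{P'}$-rotation. I would fix the sign by regarding the cusp as the limit, as $\theta \to 0^+$, of a V-shape with interior angle $\theta$: the exterior-angle formula yields rotation $\pi - \theta \to +\pi$ on the concave side (the narrow wedge into which the two branches open) and $-\pi + \theta \to -\pi$ on the convex side. By the paper's convention, the concave-side region is the one for which the cusp is inner, and the convex-side region is the one for which the cusp is outer, so the $X_{P'}$-rotation contributions are $+\pi$ for an inner cusp and $-\pi$ for an outer cusp. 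Passing through $q$ doubles each angular rotation, so each inner cusp adds $+2\pi$ and each outer cusp adds $-2\pi$ to the $Y_{P'}$-rotation, i.e.\ contributes $+1$ and $-1$ respectively to the gleam. Summing over the $m_1$ inner and $m_2$ outer cusps on $\partial\overline{R}$ gives
\[
gl(R) = \frac{4-n}{2} + m_1 - m_2,
\]
as claimed.

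To conclude that $L_{(P', gl)}$ is equivalent to $L_P$, I would note that the surfaces $F_P$ and $F_{P'}$ agree outside arbitrarily small neighborhoods of the cusps, and that the gleam correction $m_1 - m_2$ precisely encodes, via Turaev's reconstruction (Theorem~\ref{theo:turaev_reconstruction}), the local modification that each cusp of $P$ performs on the tangent surface of $P'$. The congruence condition needed for $(P', gl)$ to be a divide with gleams is automatic, since $m_1 - m_2 \in \mathbb{Z}$ does not disturb the half-integer parities already present in the canonical gleam of $P'$.

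The main obstacle will be the careful orientation bookkeeping at each cusp: the tangent $\alpha'(t)$ flips as $t$ crosses the cusp parameter, and one must reconcile the sign of this jump delivered by the V-shape limit with the branch-switching sign convention already fixed in the proof of Proposition~\ref{prop:canonical_gleam}. Once this is settled, the remainder is a local verification entirely parallel to that proof.
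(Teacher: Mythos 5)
Your overall strategy is exactly the paper's: reduce to the cusp-free computation of Proposition~\ref{prop:canonical_gleam}, then show that each cusp on $\partial\overline{R}$ contributes a $\pm\pi$ rotation to the vector field of attached sheets in $X_{P}$, which doubles to $\pm 2\pi$ under the branched covering $q$ and hence to $\pm 1$ in the gleam; the link equivalence then follows from Turaev's reconstruction because the cusped divide and the decorated divide $(P',gl)$ yield the same shadowed polyhedron. The paper carries this out by drawing $\Nbd(c;X_P)$ and its image $q(\Nbd(c;X_P))$ and reading off the rotation directly.

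The one substantive concern is your sign determination. First, the quantity you rotate --- the traversal direction of $\partial\overline{R}$, measured by exterior angles --- is not literally the quantity that computes the gleam: the total turning of the traversal direction of a disk region is always $2\pi$ regardless of the divide, whereas the gleam is the winding of the direction of the sheets of $F_P$ (the tangent direction of $P$) followed continuously along $\partial\overline{R}$ through the vertices and cusps of the polyhedron. At a cusp both notions happen to jump by $\pm\pi$, but you should justify that the sign of the sheet-direction jump agrees with the exterior-angle sign you compute. Second, and more concretely, your identification ``concave side (the narrow wedge into which the branches open) $=$ inner cusp'' appears to be the reverse of the paper's convention: an inner cusp for $R$ is one \emph{directed toward the inside of} $R$, i.e.\ its tip points into $R$, which places $R$ on the wide (convex) side, while the narrow wedge lies in the \emph{other} region. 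A quick sanity check with a lens-shaped closed curve with two cusps whose tips point outward (so the bounded region is the narrow-wedge side of both cusps) shows that the bounded region must receive gleam $2-2=0$, i.e.\ each such cusp is an \emph{outer} cusp for it; with your identification you would instead get $2+2=4$. So as written your derivation would produce $gl(R)=\frac{4-n}{2}-m_1+m_2$. You flag the orientation bookkeeping as the remaining obstacle, and indeed this is precisely where the argument must be repaired; once the sides are matched to the paper's convention (inner $=$ the side the tip points into, contributing $+\pi$ there as in the paper's Figure~\ref{figure:convention_cusp}), the rest of your argument goes through as in the paper.
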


\begin{proof}
Let $G_P$ be the trivalent graph of the divide with cusps $P$ and $Y_P$ be the shadow of $(S^3, G_P)$ associated with $P$. 
By the definition of $Y_P$, $Y_P$ is the simple polyhedron of the divide $P'$. 
Therefore, it is sufficient to show that the gleam of the shadow \(Y_{P}\) coincides with the gleam defined in Proposition \ref{prop:cusp_canonical_gleam}. 
By Proposition \ref{prop:canonical_gleam}, if $P$ has no cusps, that is, $P$ is a divide, then the gleam of $Y_{P}$ coincides with the gleam defined in Proposition \ref{prop:cusp_canonical_gleam} for $m_{1}=m_{2}=0$. 
We assume that $P$ has at least one cusp, and calculate the local contribution around a cusp of $P$ to the gleam of $Y_{P}$.  
Let $R$ be an internal region of $P$, and $c$ be an inner cusp for $R$. 
In a regular neighborhood $\Nbd(c;X_P)$ of the inner cusp $c$ for $R$, the $2$-polyhedron $X_{P}$ is depicted in the top of Figure \ref{figure:convention_cusp}. 
\begin{figure}[htbp]
\centering\includegraphics[width=6cm]{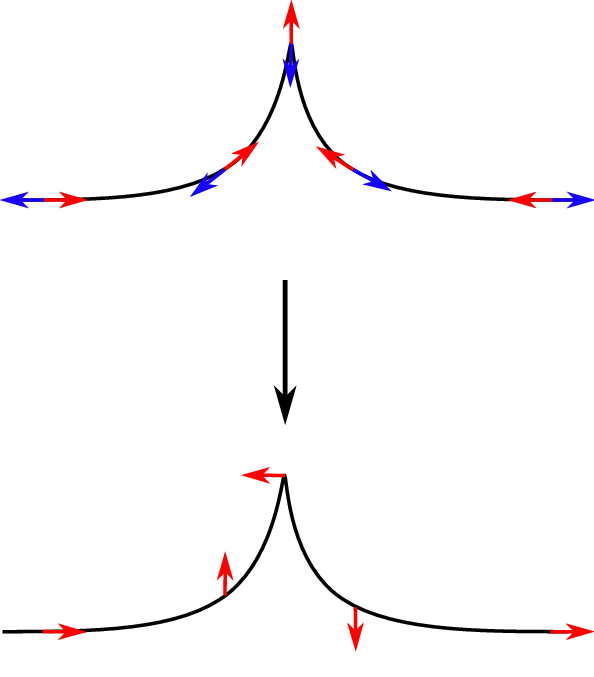}
\begin{picture}(400,0)(0,0)
\put(148, 185){$R$}
\put(148, 60){$R$}
\put(205, 108){$q$}
\end{picture}
\caption{The neighborhood $\Nbd(c ; X_P)$ of the inncer cusp $c$ for $R$ and its image under the double branched covering $q$.}
\label{figure:convention_cusp}
\end{figure}
Here, the vector field on the top of Figure \ref{figure:convention_cusp} represents the subset $F_P=\{(x,u)\in D^4 \mid x\in P,\; u\in T_xP\}$ of $X_P=F_P\cup D$. 
The vectors of the vector field rotate by $\pi$ when passing through the inner cusp $c$ along the boundary of the closure of $R$. 
Then, the image $q(\Nbd(c ; X_P)) \subset Y_P$ is depicted on the bottom of Figure \ref{figure:convention_cusp}. 
Since the double branched covering $q: D^{4} \to D^{4}$ is expressed as $q(x, re^{\theta i}) = (x, re^{2\theta i})$, the vectors on the bottom of Figure \ref{figure:convention_cusp} rotate by $2\pi$ when passing through the inner cusp $c$ along the boundary of the closure of $R$. 
Therefore, the local contribution around the inner cusp $c$ to the region $R$ is $1$. 
For an outer cusp of $R$ as well, similarly to the above, we can show that the local contribution around the outer cusp to the region $R$ is $-1$. 
All of the above implies that the gleam of $Y_{P}$ coincides with the gleam defined in Proposition \ref{prop:cusp_canonical_gleam}.
\end{proof}

\begin{example}

Let $P_{1}$ be the divide shown on the left-top in Figure \ref{figure:example_divide_with_gleams_1}. 
Then, the link $L_{P_{1}}$ of $P_{1}$ is the trefoil knot $3_{1}$ (This can be easily checked using the algorithm for drawing the link of a divide presented by Hirasawa \cite{Hira02}). 
By Proposition \ref{prop:canonical_gleam}, the canonical divide with gleams $(P_{1}, gl_{P_{1}})$ of $P_{1}$ is shown on the right-top in Figure \ref{figure:example_divide_with_gleams_1}. 
Let $P_{1}^{\ast}$ be the divide with cusps shown on the left-bottom in Figure \ref{figure:example_divide_with_gleams_1}. 
Applying Proposition \ref{prop:cusp_canonical_gleam} to $P_{1}^{\ast}$, we obtain a divide with gleams $(P_{1}, gl_{P_{1}^{\ast}})$, which is shown on the right-bottom in Figure \ref{figure:example_divide_with_gleams_1}. 
Since $gl_{P_{1}^{\ast}} = - gl_{P_{1}}$, the link $L_{P_{1}^{\ast}}$ of $P_{1}^{\ast}$ is the mirror image of $L_{P_{1}}=3_{1}$.
\begin{figure}[htbp]
\centering\includegraphics[width=9cm]{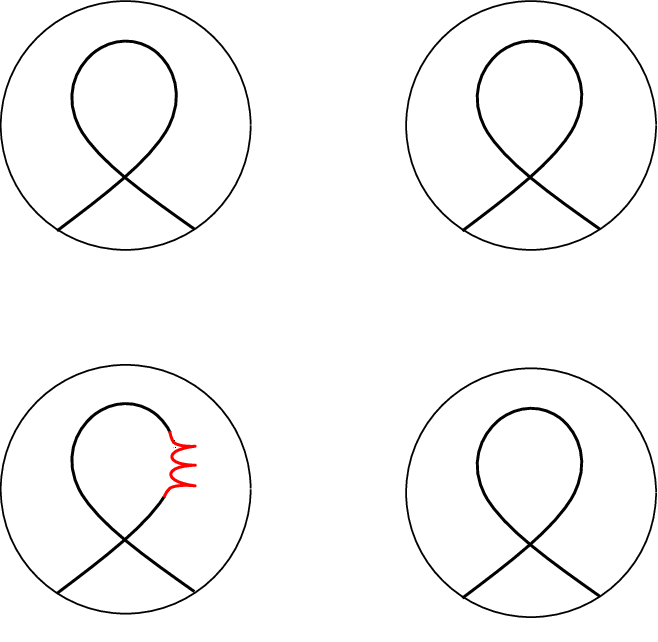}
\begin{picture}(400,0)(0,0)
\put(115, 0){$P_1^{\ast}$}
\put(115, 140){$P_1$}
\put(257, 0){$(P_1, gl_{P_1^{\ast}})$}
\put(258, 140){$(P_1, gl_{P_1})$}

\put(195, 67){$=$}
\put(195, 210){$=$}
\put(270, 75){$-\frac{3}{2}$}
\put(275, 220){$\frac{3}{2}$}

\end{picture}

\caption{The divide $P_{1}$, the divide with cusps $P_{1}^{\ast}$, and their corresponding divides with gleams.}
\label{figure:example_divide_with_gleams_1}
\end{figure}
\end{example}

\begin{example}

Let $P_2$ be the divide with cusps shown on the left in Figure \ref{figure:example_divide_with_gleams_2}.
By Theorem \ref{theo:sugawara}, the link $L_{P_2}$ of $P_2$ is a $2$-periodic knot.
The divide with gleams $(P_2 ', gl)$ obtained by Proposition \ref{prop:cusp_canonical_gleam} for $P_{2}$ is shown on the right in Figure \ref{figure:example_divide_with_gleams_2}.
\begin{figure}[htbp]
\centering\includegraphics[width=10cm]{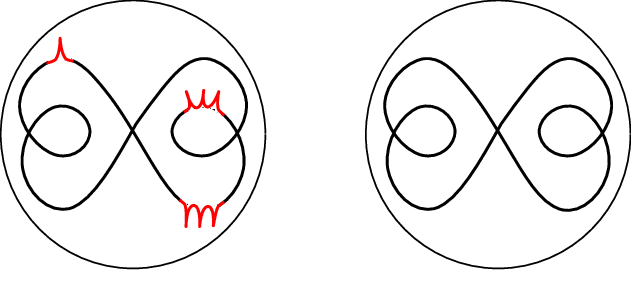}
\caption{The divide with cusps $P_2$ and its corresponding divide with gleams $(P_2 ', gl)$.}
\label{figure:example_divide_with_gleams_2}
\begin{picture}(400,0)(0,0)
\put(110, 45){$P_2$}
\put(264, 45){$(P_2 ', gl)$}

\put(246, 122){$\frac{3}{2}$}
\put(243, 100){$-\frac{1}{2}$}
\put(303, 122){$-\frac{3}{2}$}
\put(310, 100){$\frac{1}{2}$}
\put(195, 117){$=$}
\end{picture}
\end{figure}
Since $(P_2 ', gl)$ is isotopic to $(P_2 ', -gl)$ by rotating $(P_2 ', gl)$ around the origin of the $2$-disk $D$ by $\pi$, the link $L_{P_2}$ is isotopic to its mirror image. 
That is, $L_{P_2}$ is amphicheiral. 
\end{example}

\section{Proof of Main Theorem}
\label{section:proof_of_main_theorem}
In this section, we provide the proof of Theorem \ref{theo:main}, 
which is the main theorem of this paper. 
First, we introduce some notation and assumptions. 
We regard the $3$-sphere $S^3$ as the boundary of the $4$-ball $D^4$. 
We define a $2$-sphere $S^{2}$ in $S^{3}$ by
\[
S^{2} := \left\{ (x,u) \in S^{3} \mid \arg(u) = \frac{3\pi}{4} \right\} 
\cup 
\left\{ (x,u) \in S^{3} \mid \arg(u) = -\frac{\pi}{4} \right\}.
\]
Let $\iota$ be the involution of $D^4$ defined by $\iota(x,u) = (x, -u)$. 
Note that the fixed point set of $\iota$ coincides with the great circle $\{(x, u) \in S^{2} \mid |x| = 1\}$ on $S^{2}$.  
Let $L$ be a symmetric link in $S^{3}$. 
We may assume without loss of generality that the symmetric link $L$ satisfies:
\begin{itemize}
\item $\iota(L) = L$,
\item $L \subset S^{2} \times [-\varepsilon, \varepsilon]$ for sufficiently small $\varepsilon > 0$.
\end{itemize}
We consider the projection
\[
S^{2}\times[-\varepsilon,\varepsilon] \longrightarrow S^{2},
\qquad (x,t)\mapsto x.
\]

By perturbing this projection appropriately, we may assume that its restriction to $L$ becomes an immersed curve on $S^2$ with only transverse double points, and moreover that this curve is invariant under the involution $\iota|_{S^2}$. 
By assigning over/under information at each double point of this curve, 
we obtain a transvergent diagram $DL$ of $L$.
We then project $S^{2}$ onto the disk $D$. 
By composing this map with the projection 
$S^{2}\times[-\varepsilon,\varepsilon]\to S^{2}$ defined above,
we obtain a projection
$$\rho : S^{2}\times[-\varepsilon,\varepsilon] \longrightarrow D.$$
Then, we perturb this projection $\rho$ so that no double point of $\rho(L)$ lies on $\partial D$. 
We denote the image of $L$ under this perturbed projection by $P_{DL}$. 
This image $P_{DL}$ is a divide, and it is precisely the divide of the transvergent diagram $DL$ as defined in the Introduction. 
The following proposition shows that one can assign gleams to the regions of the divide $P_{DL}$ so that the resulting divide with gleams has the same associated link as $L$, and moreover that these gleams are naturally determined by the transvergent diagram $DL$.

\begin{proposition}
\label{prop:transvergent_diagrams_to_divides_with_gleams}
Let \( L \) be a symmetric link, and let \( DL \) be its transvergent diagram.  
Then, there exists a divide with gleams $(P, gl)$ satisfying the following conditions:
\begin{itemize}
\item \( P = P_{DL} \),
\item \( gl \) is given by the sum of the local contributions around the crossings of $DL$ shown in Figure \ref{figure:local_contribution},
\item \( L_{(P, gl)} \) is equivalent to \( L \).
\end{itemize}
\begin{figure}[htbp]
\centering\includegraphics[width=13cm]{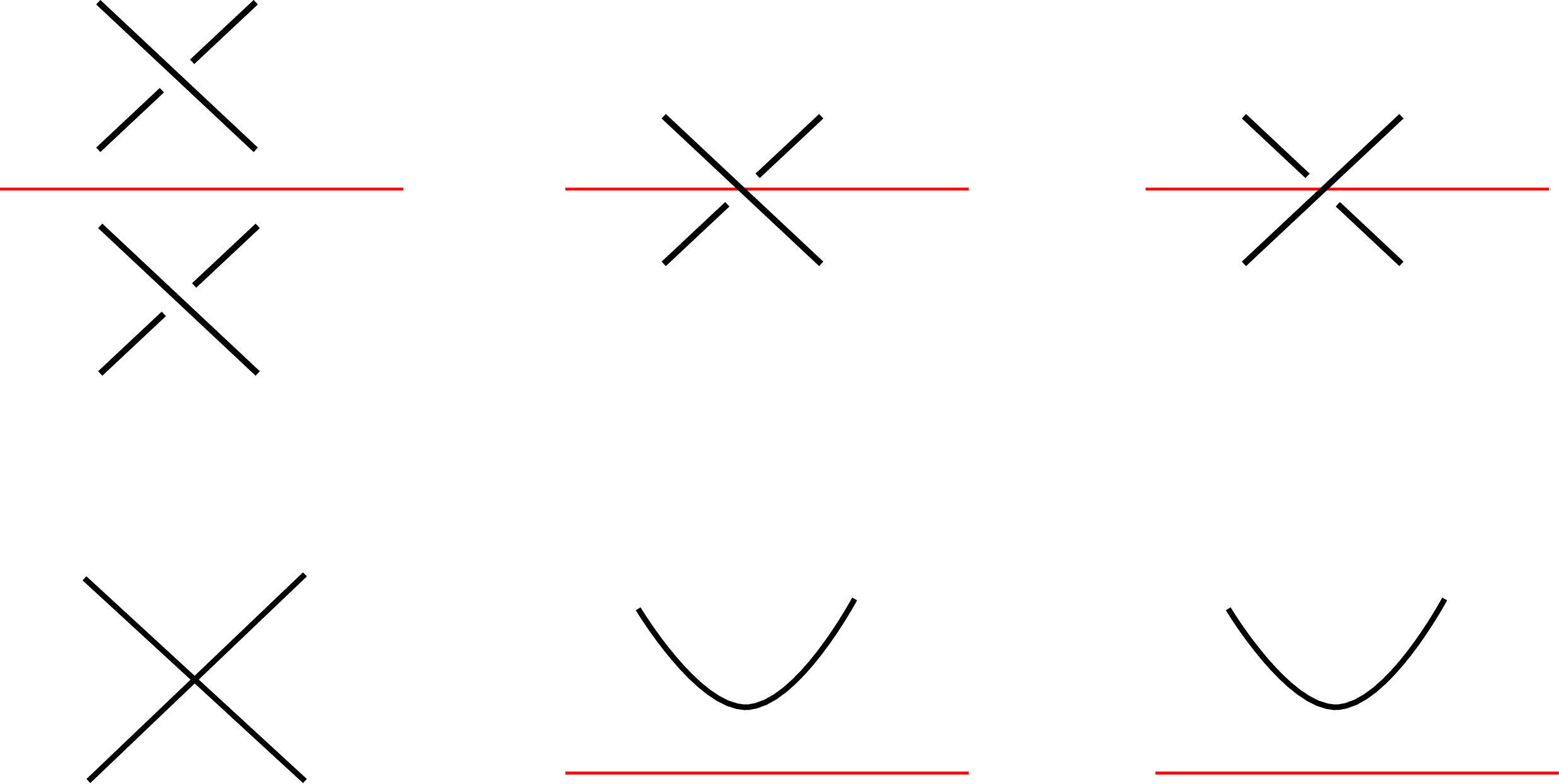}
\caption{The local contributions around crossings of $DL$.}
\label{figure:local_contribution}
\begin{picture}(400,0)(0,0)
\put(0, 210){$DL$}
\put(0, 73){$P_{DL}$}

\put(190, 65){$1$}
\put(323, 65){$-1$}
\put(58, 75){$\frac{1}{2}$}
\put(58, 37){$\frac{1}{2}$}
\put(30, 55){$-\frac{1}{2}$}
\put(77, 55){$-\frac{1}{2}$}

\put(55, 104){\Large{$\downarrow$}}
\put(65, 107){$\rho$}

\put(189, 104){\Large{$\downarrow$}}
\put(199, 107){$\rho$}

\put(326, 104){\Large{$\downarrow$}}
\put(336, 107){$\rho$}
\end{picture}
\end{figure}
\end{proposition}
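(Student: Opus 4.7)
The plan is to construct a shadowed polyhedron in $D^{4}/\iota$ realizing $Y_{P_{DL}}$ by applying the mapping cylinder construction of Example \ref{ex:shadows_of_DG} to a trivalent graph diagram on $D$ naturally obtained from $DL$, and to read off the claimed gleam directly from this construction.

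Let $C \subset S^{3}$ denote the great circle fixed by $\iota$, and $q : S^{3} \to S^{3}/\iota \cong S^{3}$ the branched double cover. Set $G := q(L \cup C)$. Because $DL$ is transvergent, $G$ is a trivalent graph in the quotient whose $3$-valent vertices are the $q$-images of the transverse intersections $L \cap C$. The projection $\pi : S^{3} \to D$ is $\iota$-invariant, so it factors as $\pi = \bar\pi \circ q$ through the quotient, and the resulting diagram
\[
DG := \bar\pi(G) = P_{DL} \cup \partial D
\]
on $D$ has its crossings in bijection with those of $DL$ (each inheriting its sign) and its trivalent vertices on $\partial D$ at the intersections $P_{DL} \cap \partial D$. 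Example \ref{ex:shadows_of_DG}, applied to $DG$, produces a shadow $X_{DG}$ of $(S^{3}/\iota, G)$ inside $D^{4}/\iota$, equipped with the gleam obtained by summing the $\pm\tfrac{1}{2}$ local contributions of Figure \ref{figure:local_contribution_Turaev} over the crossings of $DG$.

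Next, I would identify $X_{DG}$ with $Y_{P_{DL}}$ as an abstract simple polyhedron: both are obtained by attaching disk and annulus regions to the $1$-complex $P_{DL} \cup \partial D$ via the projection $\bar\pi$, so their singular sets, region decompositions and vertex structures coincide. Under this identification, the $\pm\tfrac{1}{2}$ gleam pattern of Example \ref{ex:shadows_of_DG} at each crossing of $DG$ transports to the pattern at the corresponding double point of $P_{DL}$, which is precisely Figure \ref{figure:local_contribution}. Since $X_{DG}$ is a shadow of $(S^{3}/\iota, G)$ and $q^{-1}(G) \setminus C = L$, Turaev's reconstruction of $(Y_{P_{DL}}, gl)$ followed by the $q_{P}$-branched double cover yields $L_{(P_{DL}, gl)} = L$ by definition; the fact that $(Y_{P_{DL}}, gl)$ is a shadowed polyhedron then follows automatically from Theorem \ref{theo:turaev_reconstruction}(ii).

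The main obstacle is this identification together with the local sign check: one must confirm that the $\pm\tfrac{1}{2}$ pattern produced by Example \ref{ex:shadows_of_DG} at a crossing of $DG$ matches the pattern in Figure \ref{figure:local_contribution} at the corresponding crossing of $DL$, with the correct dependence on the sign of the crossing in $DL$. This reduces to an explicit local model computation at a single positive crossing in the spirit of Propositions \ref{prop:canonical_gleam} and \ref{prop:cusp_canonical_gleam}; the negative case follows by reversing the over/under convention, and the trivalent vertices on $\partial D$ contribute trivially because they lie on the boundary of $D$ and are not counted in the gleam.
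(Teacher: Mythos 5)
Your strategy---pass to the quotient first, project the trivalent graph $G=q(L\cup C)$ to $D$, and read the gleam off the mapping-cylinder construction of Example \ref{ex:shadows_of_DG}---is genuinely different from the paper's, which forms the mapping cylinder of $\pi|_{L}$ \emph{upstairs} in $D^{4}$ and only then pushes it down through the branched covering $q$, computing the gleam by the local analysis of Figures \ref{figure:cylinder_representations_of_symmetric_links} and \ref{figure:quotient_cylinder_representations_of_symmetric_links}. The difference is not cosmetic, and your version has a genuine gap. Example \ref{ex:shadows_of_DG} requires the graph to be isotoped into a thin slab $[-\varepsilon,\varepsilon]\times D'$ over the projection disk before the mapping cylinder is taken, and $G$ does not sit in such a slab: away from the symmetry axis, $L$ lies on $S^{2}$ at fibre arguments $3\pi/4$ and $-\pi/4$, which the angle-doubling map $q$ sends to argument $3\pi/2$; but at a crossing of $DL$ lying \emph{on} the axis the two strands of $L$ are displaced in the direction normal to $S^{2}$, i.e.\ at arguments $\pi/4$ and $-3\pi/4$, which $q$ sends to argument $\pi/2$. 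So near the image of each on-axis crossing the relevant arc of $G$ makes a half-wrap around the $S^{1}$-fibre of $\bar\pi$ relative to the rest of $G$. Isotoping $G$ into a slab to legitimize Example \ref{ex:shadows_of_DG} destroys this winding and changes the diagram and the gleam; leaving $G$ in place means the gleam is \emph{not} the naive sum of $\pm\frac{1}{2}$ patterns over crossings of $\bar\pi(G)$. This winding is precisely the source of the contributions of $\pm 1$ concentrated on a single region in Figure \ref{figure:local_contribution}, which cannot arise as sums of the pattern of Figure \ref{figure:local_contribution_Turaev}. Your assertion that the $\pm\frac{1}{2}$ pattern ``transports to \dots precisely Figure \ref{figure:local_contribution}'' is therefore false as stated, and the local computation you defer to the last paragraph would in fact contradict your formula rather than confirm it.

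A second, smaller error: the crossings of $DG=\bar\pi(G)$ are not in bijection with those of $DL$. An off-axis crossing of $DL$ and its mirror image have the same image in $D$, so each mirror pair contributes a \emph{single} crossing of $DG$ (whose two $\bar\pi$-preimages in $G$ are the $q$-images of the four points of $L$ lying over the pair), whereas an on-axis crossing yields a crossing of $DG$ of a different nature, with both preimages lying on one arc of $G$ that doubles back near the branch locus. The case distinction among the three local pictures of Figure \ref{figure:local_contribution} is exactly the distinction between these configurations, not between positive and negative crossings as your final paragraph suggests. Your argument can be repaired, but only by carrying out the same analysis of the branched covering near each type of crossing that the paper performs; at that point the two proofs coincide.
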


\begin{example}
\label{ex:transvergent_diagram_to_divide_with_gleams}
Let $L$ be the symmetric link that has the transvergent diagram $DL$ shown on the left-top on Figure \ref{figure:example_cusp_to_gleam}. 
Then, we obtain the divide with gleams $(P_{DL}, gl)$ that satisfies the conditions of Proposition \ref{prop:transvergent_diagrams_to_divides_with_gleams} as shown on the right in Figure \ref{figure:example_cusp_to_gleam}.
\begin{figure}[htbp]
\centering\includegraphics[width=14cm]{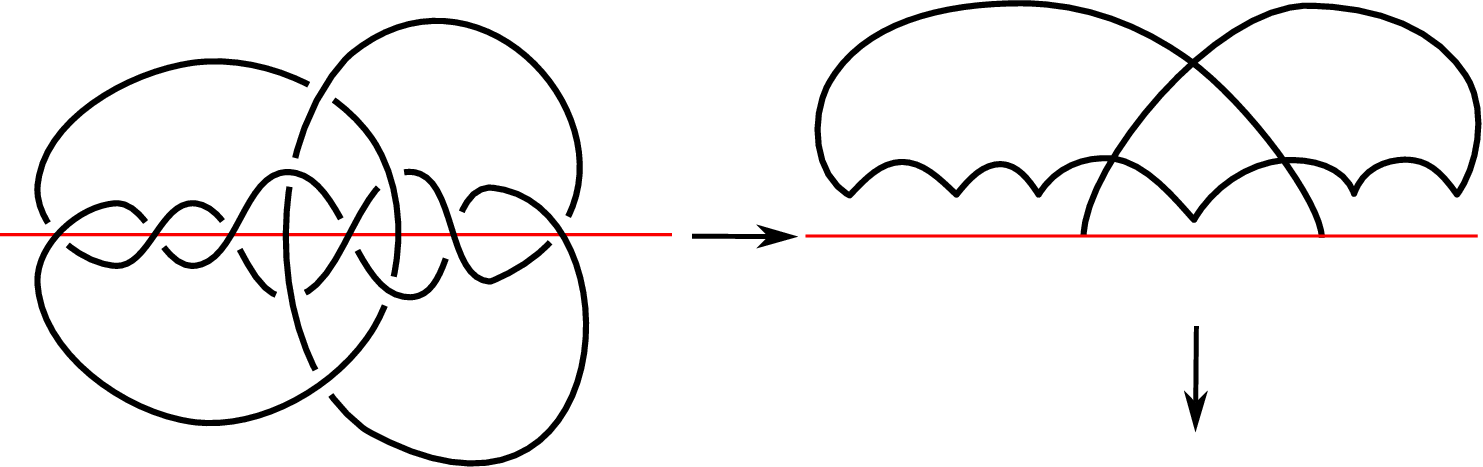}
\centering\includegraphics[width=12cm]{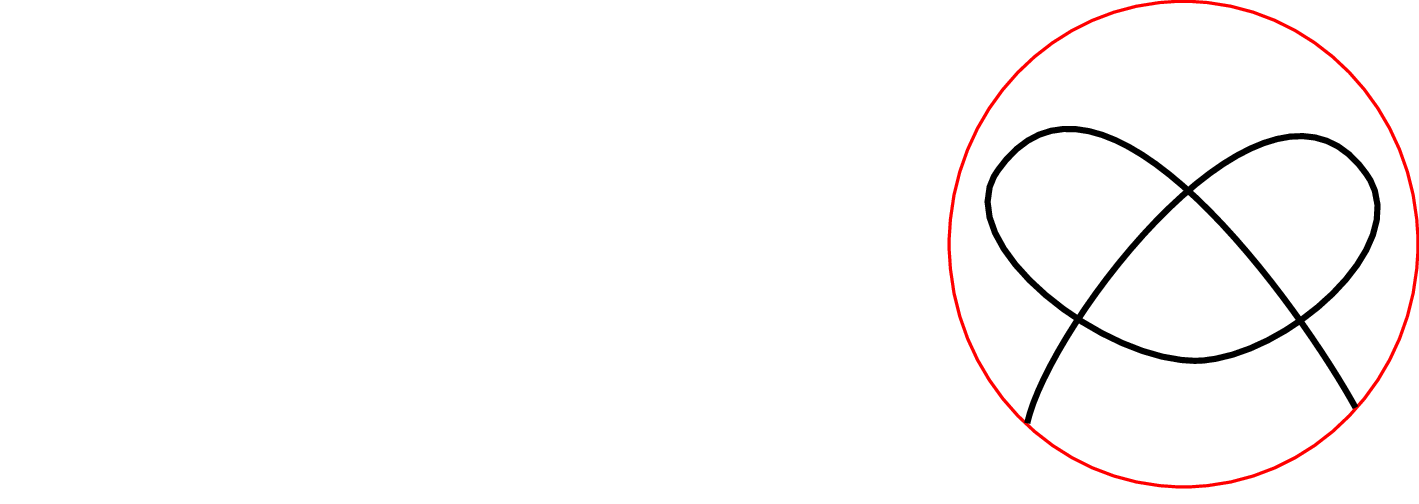}
\begin{picture}(400,0)(0,0)
\put(226, 217){{\scriptsize $-1$}}
\put(252, 217){{\scriptsize $-1$}}
\put(276, 217){{\scriptsize $-1$}}
\put(305, 240){{\scriptsize $\frac{1}{2}$}}
\put(295, 224){{\scriptsize $\frac{1}{2}$}}

\put(314, 232){{\scriptsize $-\frac{1}{2}$}}
\put(326, 220){{\scriptsize $-\frac{1}{2}$}}
\put(304, 219){{\scriptsize $-\frac{1}{2}$}}
\put(316, 209){{\scriptsize $-1$}}

\put(363, 216){{\scriptsize $1$}}
\put(389, 216){{\scriptsize $1$}}
\put(330, 240){{\scriptsize $\frac{1}{2}$}}
\put(345, 223){{\scriptsize $\frac{1}{2}$}}

\put(280, 78){$-2$}
\put(340, 78){$3$}
\put(307, 63){$-\frac{5}{2}$}

\put(75, 120){$DL$}
\put(292, 0){$(P_{DL}, gl)$}
\end{picture}

\caption{The transvergent diagram $DL$ and the divide with gleams $(P_{DL}, gl)$.}
\label{figure:example_cusp_to_gleam}
\end{figure}
\end{example}

\begin{proof}[Proof of Proposition \ref{prop:transvergent_diagrams_to_divides_with_gleams}]
Let $P$ be the divide defined as $P := P_{DL}$. 
Let $gl$ be the gleam of $P$ given by the sum of the local contributions in Figure \ref{figure:local_contribution}. 
Let $q : D^4 \to D^4$ is the double covering of $D^4$ branched along $D$. 
Let $G_L$ be the trivalent graph in $S^3$ defined as $G_L := q(L \cup \partial D)$. 
To prove Proposition \ref{prop:transvergent_diagrams_to_divides_with_gleams}, 
it suffices to show that the link $L$ is equivalent to the link $L_{(P,gl)}$ of $(P,gl)$. 
To show this, it is enough to show that the trivalent graph $G_L$ is equivalent to the trivalent graph $G_{(P,gl)}$ of $(P,gl)$. 
Let $X_{DL}$ be the mapping cylinder of the projection $\rho : S^{2} \times [-\varepsilon, \varepsilon] \to D$ :
$$X_{DL} := \big((L \times [0,1] \sqcup D)\big) / (x, 0) \sim \rho(x).$$ 
$X_{DL}$ can naturally be regarded as a $2$-polyhedron in $D^{4}$.
Then, we set \( Y_{DL} := q(X_{DL}) \). 
Similarly to the proof of Proposition \ref{prop:YP_is_shadow}, $Y_{DL}$ is a shadow of $(S^{3}, G_{L})$. 
Thus, if the gleam \( gl_{DL} \) of the shadow $Y_{DL}$ coincides with the gleam \( gl \), by Turaev's reconstruction, we can show that the two trivalent graphs \( G_{L} \) and \( G_{(P, gl)} \) are equivalent. 
In the following, we show that $gl_{DL}$ and $gl$ coincide. 
By the assumption for a symmetric link we made at the beginning of this section, we can draw a neighborhood of each crossing of $L$ as shown in Figure \ref{figure:cylinder_representations_of_symmetric_links}.  
Here, the four $2$-disks in $S^3$ depicted in Figure \ref{figure:cylinder_representations_of_symmetric_links} are, from top to bottom,
$\{(x, u) \in S^{3} \mid \arg(u) = \frac{3\pi}{4}\}$, 
$\{(x, u) \in S^{3} \mid \arg(u) = \frac{\pi}{4}\}$,
$\{(x, u) \in S^{3} \mid \arg(u) = -\frac{\pi}{4}\}$, and $\{(x, u) \in S^{3} \mid \arg(u) = -\frac{3\pi}{4}\}$.
The images of these crossings under the double branched covering $q|_{S^3} : S^3 \to S^3$ are drawn as shown on the top of Figure \ref{figure:quotient_cylinder_representations_of_symmetric_links}.
\begin{figure}[htbp]
\centering\includegraphics[width=13cm]{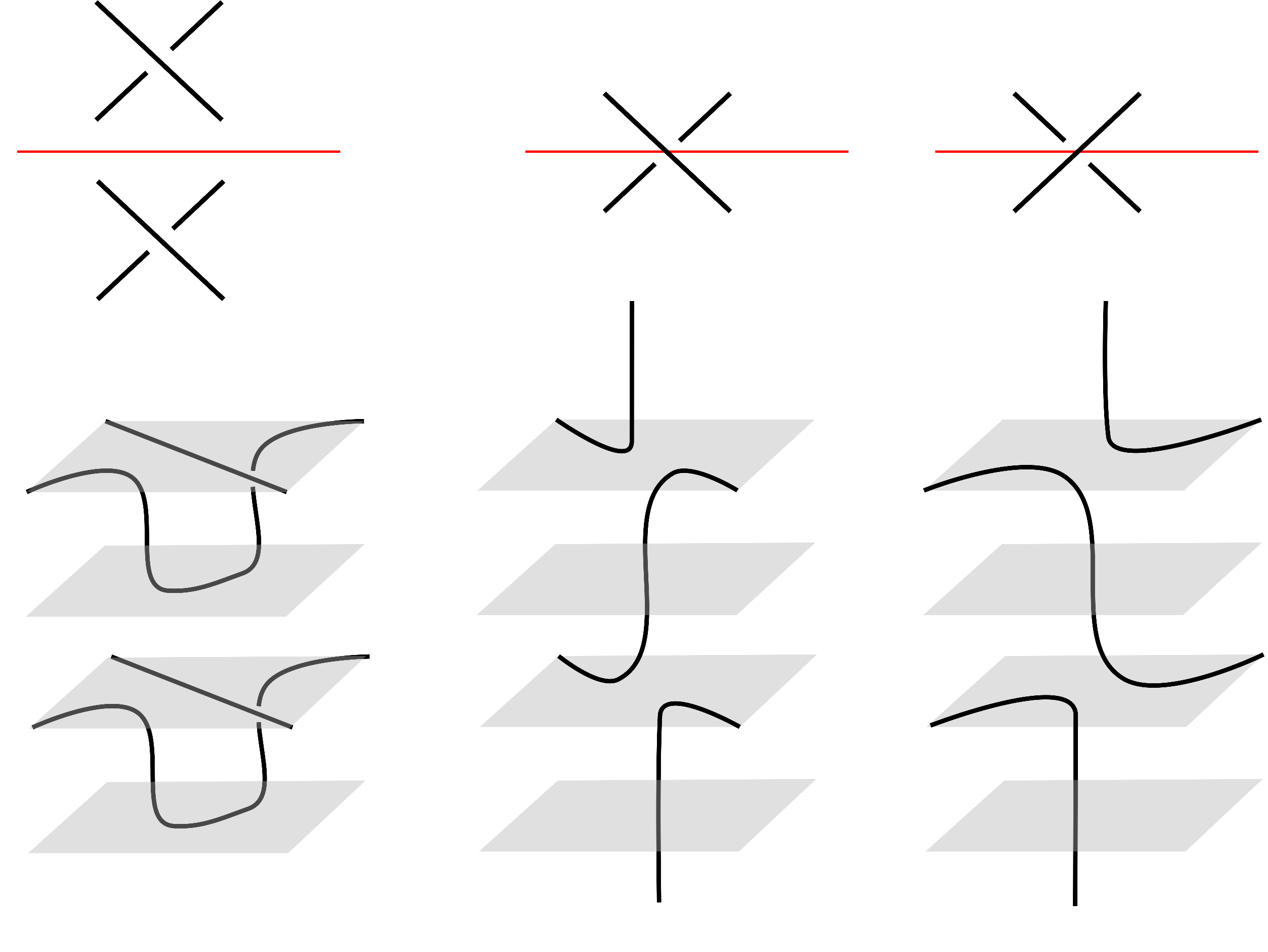}
\caption{The local pictures of crossings of $L$.}
\label{figure:cylinder_representations_of_symmetric_links}
\begin{picture}(400,0)(0,0)
\put(6, 180){$\frac{3\pi}{4}$}
\put(7, 143){$\frac{\pi}{4}$}
\put(0, 110){$-\frac{\pi}{4}$}
\put(0, 70){$-\frac{3\pi}{4}$}
\end{picture}
\end{figure}
Here, the two $2$-disks in $S^3$ depicted in Figure \ref{figure:quotient_cylinder_representations_of_symmetric_links} are $\{(x, u) \in S^{3} \mid \arg(u) = \frac{3\pi}{2}\}$ and $\{(x, u) \in S^{3} \mid \arg(u) = \frac{\pi}{2}\}$. 
Corresponding to these images, $Y_{DL}$ is locally depicted on the bottom of Figure \ref{figure:quotient_cylinder_representations_of_symmetric_links}. 
\begin{figure}[htbp]
\centering\includegraphics[width=12cm]{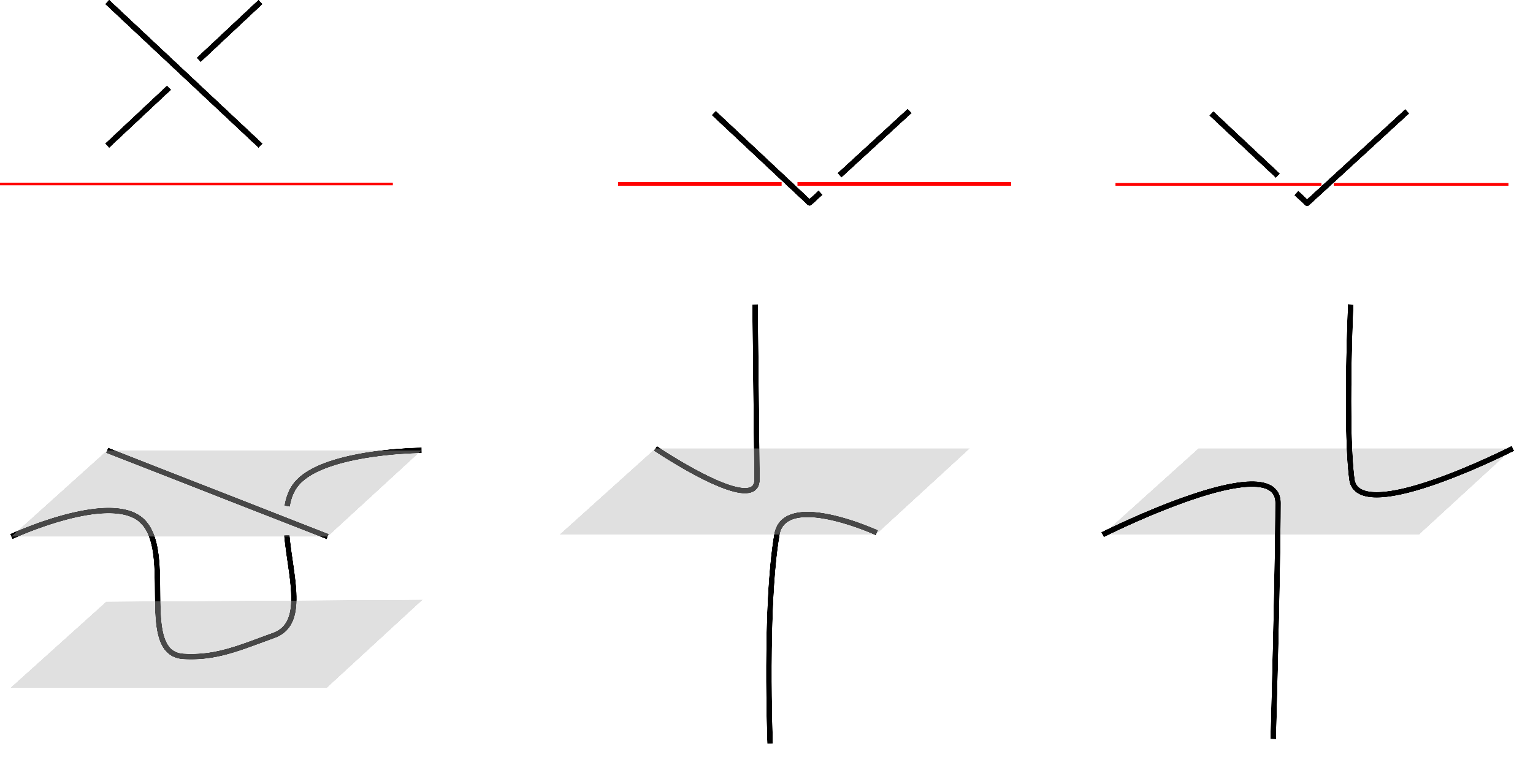}
\centering\includegraphics[width=12cm]{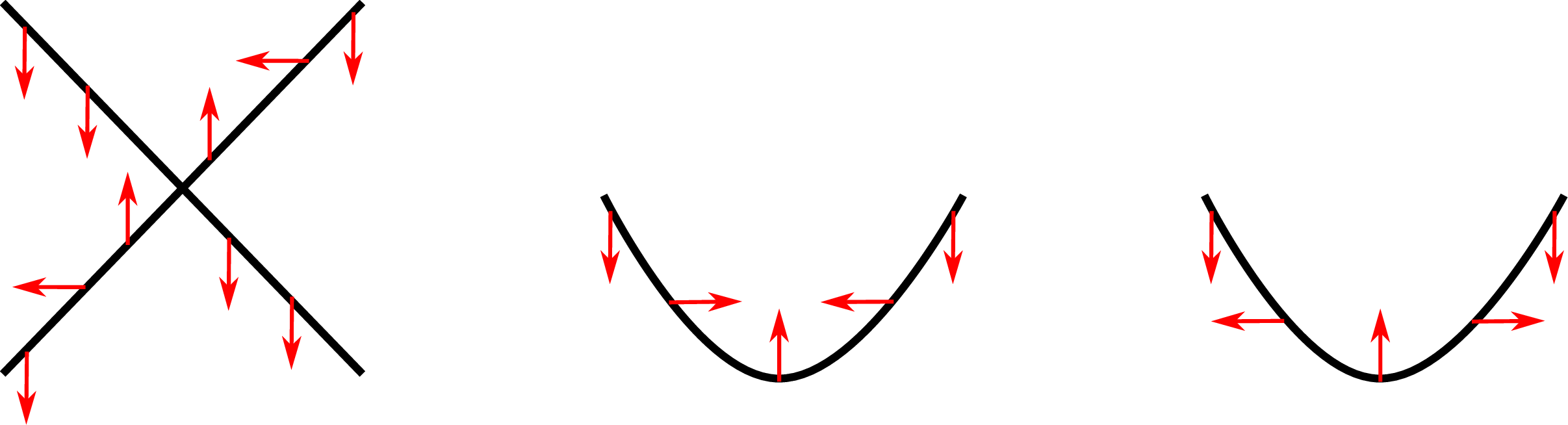}
\caption{The image of each crossing of $L$ under the double branched covering $q|_{S^3}$ and the local representation of $Y_{DL}$ corresponding to these images.}
\label{figure:quotient_cylinder_representations_of_symmetric_links}
\begin{picture}(400,0)(0,0)
\put(12, 223){$\frac{3\pi}{2}$}
\put(12, 183){$\frac{\pi}{2}$}


\put(20, 110){$-\frac{1}{2}$}
\put(90, 110){$-\frac{1}{2}$}
\put(65, 143){$\frac{1}{2}$}
\put(65, 74){$\frac{1}{2}$}

\put(198, 105){$1$}
\put(323, 105){$-1$}
\end{picture}
\end{figure}
Here, the vector field represents the disk or annulus regions of \( Y_{DL} \) attached to \( P \).  
From this figure, we can check that the local contributions around the crossings of $DL$ to the gleam of $Y_{DL}$ are as shown in Figure \ref{figure:local_contribution}.
Hence, \( gl_{DL} \) and \( gl \) are coincide. 
\end{proof}
Next, we show that for a given divide with gleams, one can algorithmically construct a divide with cusps that has the same associated link. 
\begin{proposition}
\label{prop:divides_with_gleams_to_divides_with_cusps}
Let $(P,gl)$ be a divide with gleams.
Then there exists a divide with cusps $P'$ such that:
\begin{itemize}
\item $P'$ is obtained from $P$ by adding finitely many cusps;
\item $L_{P'}$ is equivalent to $L_{(P,gl)}$.
\end{itemize}
\end{proposition}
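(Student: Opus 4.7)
The plan is to realize the target gleam $gl$ by inserting finitely many cusps into $P$, exploiting the fact that, by Proposition \ref{prop:cusp_canonical_gleam}, a cusp on an edge of $P$ pointing into an adjacent region changes the associated gleam of that region by $+1$ and the gleam on the other side by $-1$.

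First, I compare $gl$ with the canonical gleam $gl_{P}$ of Proposition \ref{prop:canonical_gleam} and set $d_{R} := gl(R) - gl_{P}(R)$ for every internal region $R$. I claim $d_{R} \in \ZZ$. Indeed, $gl$ and $gl_{P}$ are both gleams on the same simple polyhedron $Y_{P}$, and whether a gleam on a given internal region is an integer or a strict half-integer is dictated by the topology of $Y_{P}$ near $\partial R$ (namely by the triviality of the associated $S^{0}$-bundle), independent of the chosen gleam. Hence $gl$ and $gl_{P}$ share the same half-integer parity at each internal region, and $d_{R}$ is an integer.

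Next, I reduce the construction of $P'$ to a combinatorial flow problem. Form the dual graph $G^{*}$ of the cell decomposition of $D$ cut out by $P$ and merge all external regions into a single auxiliary vertex $*$; the resulting graph $G'$ is connected since $D$ is connected, so it admits a spanning tree $T$ rooted at $*$. Process the internal regions from the leaves of $T$ toward $*$: at each internal region $R$ whose current discrepancy with $gl(R)$ is $k \in \ZZ$, insert $|k|$ cusps at distinct interior points of an edge of $P$ realizing the tree edge from $R$ to its parent $R'$, oriented into $R$ when $k > 0$ and into $R'$ when $k < 0$. This brings the gleam at $R$ to $gl(R)$ while shifting the discrepancy at $R'$ by $-k$, and does not disturb already-processed regions. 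When the process reaches $*$, every internal region has gleam equal to $gl(R)$, and the remaining discrepancy is absorbed by $*$, which is unconstrained.

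Let $P'$ be the divide with cusps produced by this procedure. By Proposition \ref{prop:cusp_canonical_gleam}, the gleam that $P'$ assigns to its simple polyhedron is precisely $gl$, so the shadowed polyhedra associated with $P'$ and with $(P, gl)$ coincide. Turaev's reconstruction then yields the same trivalent graph in $S^{3}$, and pulling back along the branched covering $q$ gives $L_{P'}$ equivalent to $L_{(P, gl)}$, as desired.

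The main obstacle I anticipate is the spanning-tree step: I must check that each tree edge of $G'$ corresponds to a genuine edge of $P$ along which new cusps can be freely inserted without colliding with double points, endpoints, or previously placed cusps, and that the integrality $d_{R} \in \ZZ$ suffices to carry out the sweep. The former is immediate because a tree edge of $G'$ reflects an actual edge of $P$ shared by the two regions and its interior is one-dimensional with infinitely many available points; the latter follows from the parity argument above, which guarantees the discrepancy transferred at each step remains integral throughout the induction.
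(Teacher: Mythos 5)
Your proposal is correct and follows essentially the same route as the paper: compare $gl$ with the canonical gleam $gl_{P}$ of Proposition \ref{prop:canonical_gleam}, then sweep the integer discrepancies $gl(R)-gl_{P}(R)$ along a spanning tree of the dual graph toward an external region by inserting inner/outer cusps on the dual edges, the external region being unconstrained. The only differences are cosmetic --- you merge all external regions into a single root vertex where the paper iterates over components of the internal-region graph rooted at a chosen external vertex --- and you additionally justify the integrality of the discrepancies via the parity constraint on gleams, a point the paper's algorithm asserts without comment.
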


\begin{proof}
Let $gl_P$ be the canonical gleam of the divide $P$. 
By Proposition~\ref{prop:cusp_canonical_gleam}, adding a cusp along an edge of the divide $P$ changes the gleam of an adjacent region by $+\frac{1}{2}$ if the cusp is an inner cusp for the region, and by $-\frac{1}{2}$ if it is an outer cusp.
Thus it suffices to modify $P$ by adding cusps so that $gl_P$ coincides with $gl$. We construct such a divide inductively as follows. 
Set $P' := P$ and $gl' := gl_P$. Let $G$ be the dual graph of $P'$, whose vertices correspond to regions of $P'$ and whose edges correspond to pairs of adjacent regions of $P'$. Choose one vertex $v_0$ corresponding to an external region of $P'$. Consider a maximal tree $T$ of $G$ containing this vertex $v_0$. 
For each vertex $v$ of $T$ whose corresponding region $R_v$ is an internal region, 
we define
\[
n_v := gl(R_v) - gl'(R_v) \in \mathbb{Z}.
\]
Let $w \neq v_0$ be a vertex of $T$ that is incident to exactly one edge of $T$. 
Let $w_1$ denote the unique vertex of $T$ adjacent to $w$ along that edge.
Adjust the gleams by adding cusps along the edge of the divide
that separates the regions $R_w$ and $R_{w_1}$, 
which corresponding to the vertices $w$ and $w_1$ in the dual graph. 
More precisely, attach $|n_w|$ inner (resp.\ outer) cusps
along this edge on the side of $R_w$
if $n_w \ge 0$ (resp.\ $n_w < 0$).
At this point, the gleam of $R_w$ changes by $n_w$ and that of $R_{w_1}$ changes by $-n_w$. 
Consequently, the resulting divide with cusps satisfies $gl'(R_w)=gl(R_w)$. 
Remove $w$ together with the edge joining $w$ to $w_1$ in the tree $T$. 
Repeat this procedure for the resulting graph. 
Once all vertices of $T$ have been processed, 
remove from $G$ all vertices and edges belonging to $T$. 
If $G$ still has remaining components,
apply the same procedure to each of them. 
Since each step decreases the number of vertices, the process terminates,
and the resulting divide with cusps $P'$ satisfies $gl'=gl$.
\end{proof}

\begin{example}
\label{ex:divide_with_gleams_to_divide_with_cusps}
Let $(P, gl)$ be the divide with gleams shown on the left-top of Figure \ref{figure:algorithm_add_cusps_2}. 
By applying the algorithm in the proof of Proposition \ref{prop:divides_with_gleams_to_divides_with_cusps} to $(P, gl)$, 
we obtain the divide with cusps $P'$ that satisfies the conditions of Proposition \ref{prop:divides_with_gleams_to_divides_with_cusps} as shown in Figure \ref{figure:algorithm_add_cusps_2}.
\begin{figure}[htbp]
\centering\includegraphics[width=8cm]{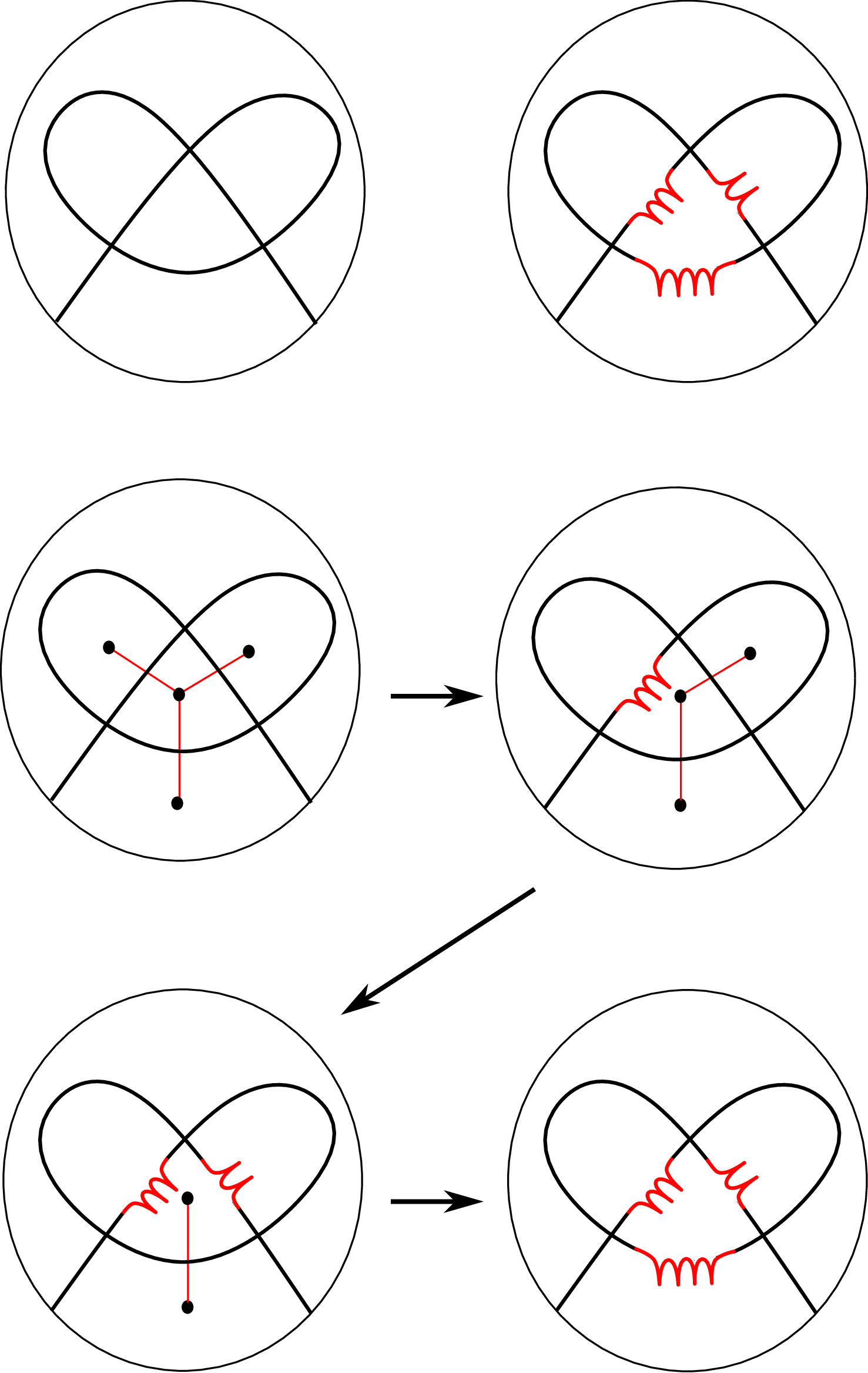}
\begin{picture}(400,0)(0,0)

\put(105, 330){$-2$}
\put(153, 330){$3$}
\put(128, 313){$-\frac{5}{2}$}
\put(119, 255){$(P, gl)$}
\put(261, 255){$P'$}

\put(196, 316){\Large{$=$}}

\put(105, 207){$-3$}
\put(153, 207){$2$}
\put(117.5, 182){$-3$}

\put(240, 204){$0$}
\put(285, 207){$2$}
\put(248, 179){$-6$}

\put(105, 75){$0$}
\put(153, 75){$0$}
\put(117.5, 47){$-4$}

\put(240, 75){$0$}
\put(285, 75){$0$}
\put(265, 50){$0$}

\end{picture}
\caption{The divide with gleams $(P, gl)$ and the divide with cusps $P'$.}
\label{figure:algorithm_add_cusps_2}
\end{figure}
\end{example}

Theorem \ref{theo:main} is proved constructively by using Proposition \ref{prop:transvergent_diagrams_to_divides_with_gleams} and \ref{prop:divides_with_gleams_to_divides_with_cusps} as follows.
\begin{proof}[Proof of Theorem \ref{theo:main}]
By applying Proposition \ref{prop:transvergent_diagrams_to_divides_with_gleams} to the transvergent diagram $DL$ of $L$, we obtain a divide with gleams $(P_{DL}, gl)$.
Then, by applying Proposition \ref{prop:divides_with_gleams_to_divides_with_cusps} to the divide with gleams $(P_{DL}, gl)$, 
we obtain a divide with cusps $P$ that satisfies the conditions of Theorem \ref{theo:main} (see Figure \ref{figure:example_main_theorem}). 
\begin{figure}[htbp]
\centering\includegraphics[width=11cm]{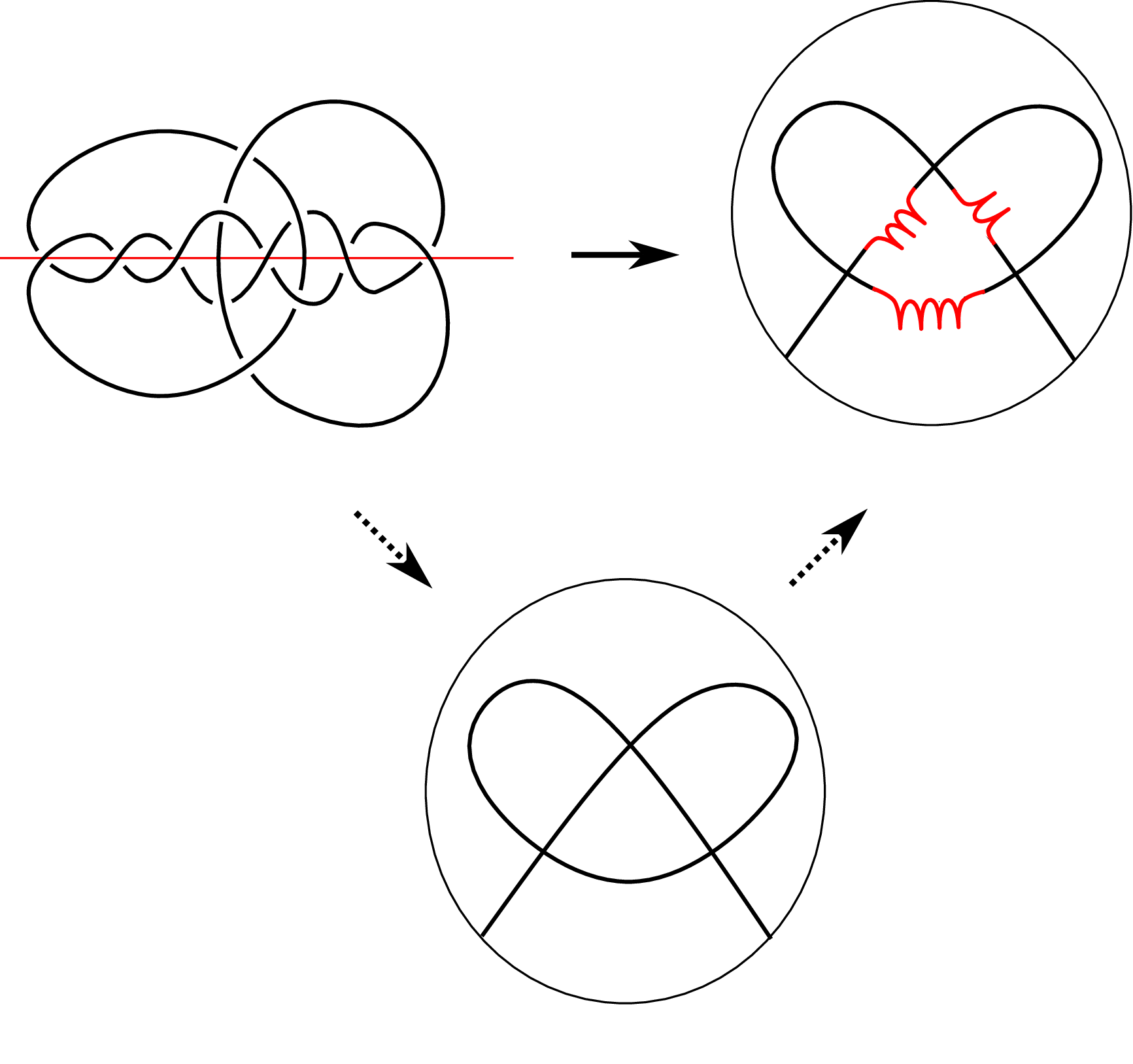}
\caption{The transvergent diagram $DL$ and the divide with cusps $P$. The divide with cusps $P$ is obtained from $DL$ via the divide with gleams $(P_{DL}, gl)$.}
\label{figure:example_main_theorem}

\begin{picture}(400,0)(0,0)
\put(184, 140){$-2$}
\put(240, 140){$3$}
\put(208, 120){$-\frac{5}{2}$}

\put(65, 188){Proposition \ref{prop:transvergent_diagrams_to_divides_with_gleams}}
\put(285, 188){Proposition \ref{prop:divides_with_gleams_to_divides_with_cusps}}

\put(100, 220){$DL$}
\put(296, 220){$P$}
\put(193, 60){$(P_{DL}, gl)$}

\end{picture}
\end{figure}
\end{proof}

\section{A transvergent diagram of the link of a divide with cusps}
\label{section:A transvergent diagram of the link of a divide with cusps}

It is known that there are algorithms for drawing the link of a given divide or a divide with cusps. Hirasawa \cite{Hira02} gave an algorithm to draw a diagram of the link of a given divide, and Chmutov \cite{C03} gave an algorithm to draw a transvergent diagram of the link of a divide. For a divide with cusps, Sugawara \cite{S24} gave an algorithm to draw a diagram of the link of a given divide with cusps. 
This Section provides an algorithm to draw a transvergent diagram of the link of a given divide with cusps.

\begin{theorem}
\label{theo:appendix}
Let $P$ be a divide with cusps.
The following steps produce a transvergent diagram $DL_P$ of the link $L_P$.

\begin{itemize}
\item[1.]
Let $(P',gl)$ be the divide with gleams obtained from $P$
by Proposition~\ref{prop:cusp_canonical_gleam}.
Here $P'$ is obtained from $P$ by removing all cusps.

\item[2.]
Take the double $P' \cup \widehat{P'} \subset D \cup \widehat{D}$ of the divide $P' \subset D$, where $\widehat{P'}$ and $\widehat{D}$ are the copies of the divide $P'$ and the $2$-disk $D$, respectively. Then, identify $D \cup \widehat{D}$ with the $2$-sphere $S^2$, and regard $P' \cup \widehat{P'}$ as a tetravalent graph on $S^2$.

\item[3.]
Assign arbitrary over/under information to each tetravaelnt vertex of $P' \cup \widehat{P'}$, thereby obtaining a transvergent diagram of a link in $S^{3}$. Denote this transvergent diagram by $DL_{0}$ and the link it represents by $L_{0}$.

\item[4.]
Let $gl_0$ be the gleam on $P'$ induced from $DL_0$
by Proposition~\ref{prop:transvergent_diagrams_to_divides_with_gleams}.

\item[5.]
For each internal region $R$ of $P'$, set
\[
n_R := gl(R)-gl_0(R) \in \ZZ.
\]
Let $\widetilde {l_R}$ be the unknot in $DL_0$ that intersects transversely each of the two regions corresponding to $R$ in exactly one point.

\item[6.] Obtain a transvergent  diagram $DL_{P}$ of $L_{P}$ by performing the $(-\frac{1}{2}n_{R})$-twists along $\widetilde{l}_{R}$ to $DL_{0}$ for all internal region $R$ of $P'$. \end{itemize}
\end{theorem}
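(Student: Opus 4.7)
My plan is to reduce the claim to Propositions \ref{prop:cusp_canonical_gleam} and \ref{prop:transvergent_diagrams_to_divides_with_gleams} by showing that the divide with gleams canonically associated with the output diagram $DL_P$ coincides with the input data $(P', gl)$. Since $(P', gl)$ represents $L_P$ by Proposition \ref{prop:cusp_canonical_gleam}, Proposition \ref{prop:transvergent_diagrams_to_divides_with_gleams} will then guarantee that $DL_P$ is a transvergent diagram of $L_P$. The first thing to verify is that Steps 2--4 produce a legitimate transvergent diagram $DL_0$ whose associated divide with gleams has underlying divide equal to $P'$: the doubled graph $P' \cup \widehat{P'}$ on $S^2$ is, by construction, invariant under the reflection exchanging $D$ with $\widehat{D}$ along $\partial D = \partial \widehat{D}$, so any choice of over/under information at its tetravalent vertices yields a transvergent diagram, and Proposition \ref{prop:transvergent_diagrams_to_divides_with_gleams} applied to $DL_0$ returns a divide with gleams of the form $(P', gl_0)$.

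The heart of the plan is to analyze how the twist operation in Step 6 affects the gleam. Because $\widetilde{l}_R$ meets the two mirror-image regions $R_+$ and $R_-$ in exactly one point each, the unknot $\widetilde{l}_R$ is symmetric with respect to the reflection of $S^3$ fixing the symmetry axis, so a twist along it preserves the transvergent character of $DL_0$. I would then isotope $\widetilde{l}_R$ so that its twisting disk meets the link only at the two points in $R_+$ and $R_-$, and carefully trace which local models of Figure \ref{figure:local_contribution} are introduced when a twist is inserted. The computation should show that the new crossings all project, under $\pi$, into the single region $R$ of $P'$ and that their local contributions add to a gleam change of exactly $n_R$ on $R$ when the prescribed $(-\tfrac{1}{2}n_R)$-twist is performed; the gleams on all other regions are untouched, because the twist disk can be kept away from every other region. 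This is why the coefficient $-\tfrac{1}{2}$ appears: the gleam takes half-integer values due to the double-cover interpretation, and pairs of crossings inserted symmetrically above and below the diagram plane contribute $\pm 1$ per half-twist.

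Once this local-contribution calculation is in hand, the conclusion follows immediately. Since the twists along distinct unknots $\widetilde{l}_R$ act on disjoint regions of the associated divide with gleams, performing all of them in Step 6 yields a transvergent diagram whose canonically associated divide with gleams equals $(P', gl_0 + (gl - gl_0)) = (P', gl)$. By Proposition \ref{prop:transvergent_diagrams_to_divides_with_gleams} the resulting diagram represents the link of $(P', gl)$, and by Proposition \ref{prop:cusp_canonical_gleam} this link is $L_P$.

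The main obstacle will be the precise bookkeeping of the twist in the middle paragraph: one must confirm that $\widetilde{l}_R$ can indeed be positioned so that its twist contribution is concentrated on the single region $R$, with no spurious contributions spilling over into adjacent regions, and verify the exact sign and normalization of the gleam change. Additionally, some care is needed to ensure that the choice of over/under information in Step 3 does not obstruct the isotopies used in Step 6 --- this can be handled by first isotoping $\widetilde{l}_R$ into a standard position inside a bicollar neighborhood of the projection $S^2$ before performing the twist.
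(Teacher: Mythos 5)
Your overall reduction --- show that the output diagram represents the link of $(P',gl)$, which is $L_P$ by Proposition \ref{prop:cusp_canonical_gleam} --- is the same as the paper's, and your verification of Steps 1--4 is fine. The gap is in the middle paragraph, where you analyze the twist, and it is a genuine one. Since $R$ is an \emph{internal} region of $P'$, its two lifts $R_+\subset D$ and $R_-\subset\widehat D$ lie in opposite hemispheres of the diagram sphere, so any disk spanned by $\widetilde l_R$ meets the diagram sphere in an arc running from $R_+$ across the symmetry axis to $R_-$; unless $R$ happens to be adjacent to an external region, this arc must cross every strand separating $R_+$ from the axis together with all of their mirror images. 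Consequently your claims that ``the twist disk can be kept away from every other region'' and that ``the new crossings all project, under $\pi$, into the single region $R$'' are false: the twist is performed on $2d$ strands ($d$ the depth of $R$), and for $d>1$ it creates mirror pairs of crossings \emph{off} the axis, which project to \emph{new double points} of the divide. Hence the divide canonically associated to $DL_P$ via Proposition \ref{prop:transvergent_diagrams_to_divides_with_gleams} is not $P'$ at all, and the intended conclusion ``the associated divide with gleams equals $(P', gl_0+(gl-gl_0))$'' does not even typecheck. (You also conflate the two points where $\widetilde l_R$ pierces the diagram sphere, which lie in the complementary regions $R_\pm$, with the points where the twisting disk meets the link.)

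What is missing is precisely the mechanism the paper uses: by Turaev's reconstruction, changing the gleam of the region $R$ by $n_R$ amounts to a $\tfrac{1}{n_R}$-surgery on the fiber $l_R=\pi^{-1}(x_R)$ of the shadow projection over a point $x_R\in R$, performed on the trivalent graph $G_0$ in the quotient $S^3$; this surgery is realized diagrammatically as a $(-n_R)$-twist along $l_R$, and since the branched double covering $q|_{S^3}$ doubles the meridional angle of the complementary solid torus, it lifts to a $(-\tfrac12 n_R)$-twist along $\widetilde l_R$ upstairs. One must also check, as the paper does, that the curves $l_1,\dots,l_n$ have pairwise linking number zero so the surgeries do not interfere. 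Without this surgery interpretation (or a full recomputation of the gleam of the genuinely larger divide underlying $DL_P$, which your outline does not attempt), the assertion that the twist changes the gleam by exactly $n_R$ on $R$ and by $0$ elsewhere is unsupported, and it is the entire content of the theorem.
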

\begin{example}
Let $P$ be the divide with cusps shown on the left in Figure \ref{figure:exapmle_appendix1}. Then, we obtain a transvergent diagram $DL_{P}$ of the link $L_{P}$ by following the steps of Theorem \ref{theo:appendix}, as described below.

\begin{itemize}
\item[1.] By applying Proposition \ref{prop:cusp_canonical_gleam} to the divide with cusps $P$, we obtain the divide with gleams $(P', gl)$ shown on the right in Figure \ref{figure:exapmle_appendix1}. 

\begin{figure}[htbp]
\centering\includegraphics[width=9cm]{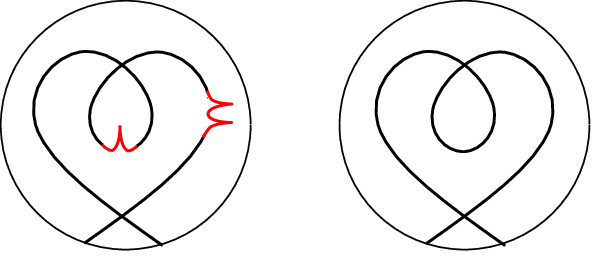}
\begin{picture}(400,0)(0,0)
\put(270, 80){$\frac{5}{2}$}
\put(262, 53){$-\frac{5}{2}$}
\put(195, 75){$=$}

\put(120, 0){$P$}
\put(257, 0){$(P', gl)$}
\end{picture}

\caption{The divide with cusps $P$ and its corresponding divide with gleams $(P', gl)$.}
\label{figure:exapmle_appendix1}
\end{figure}

\item[2.] We take the double of the divide $P'$ and obtain a transvergent diagram $DL_0$ by arbitrarily assigning over/under information to each tetravalent vertex of it. 
Then, we obtain a divide with gleams $(P_{DL_0}, gl_0)$ by applying Proposition \ref{prop:transvergent_diagrams_to_divides_with_gleams} to $DL_0$ (see Figure \ref{figure:exapmle_appendix2}).
\begin{figure}[htbp]
\centering\includegraphics[width=12cm]{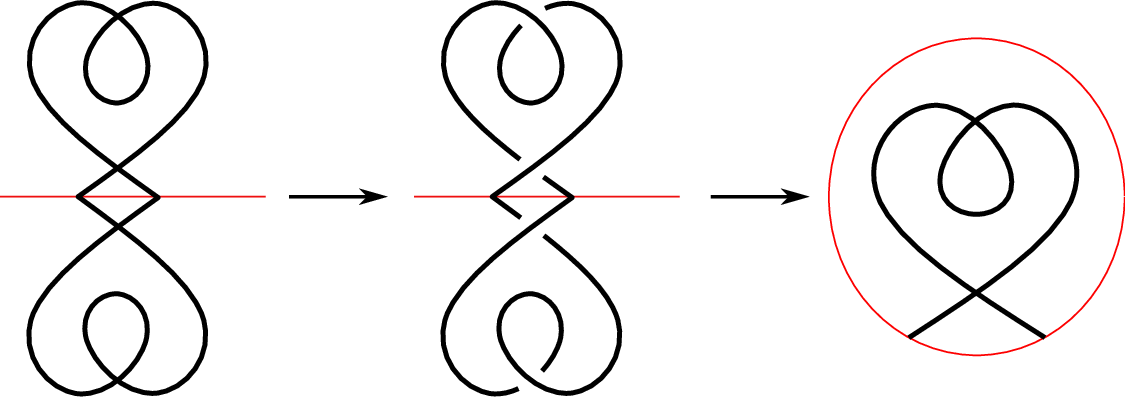}
\begin{picture}(400,0)(0,0)
\put(322, 84){$\frac{1}{2}$}
\put(315, 61){$-\frac{3}{2}$}

\put(298, 0){$(P_{DL_0}, gl_0)$}
\put(181, 0){$DL_{0}$}

\end{picture}
\caption{The double of the divide $P'$, the transvergent diagram $DL_0$, and the divide with gleams $(P_{DL_0}, gl_0)$.}
\label{figure:exapmle_appendix2}
\end{figure}

\item[3.] We obtain a transvergent diagram $DL_P$ of the link $L_{P}$ by performing $(-\frac{1}{2} n_{R_1})$-twists and $(-\frac{1}{2} n_{R_2})$-twists on $DL_0$ along the two unknots $\widetilde{l}_1$ and $\widetilde{l}_2$ in Figure \ref{figure:exapmle_appendix}, respectively. Here, $n_{R_1}:=gl(R_1)-gl_0(R_1)= -\frac{5}{2} -(-\frac{3}{2})=-1$, $n_{R_2}:=gl(R_2)-gl_0(R_2)= \frac{5}{2}-\frac{1}{2}=2$.
\begin{figure}[htbp]
\centering\includegraphics[width=13cm]{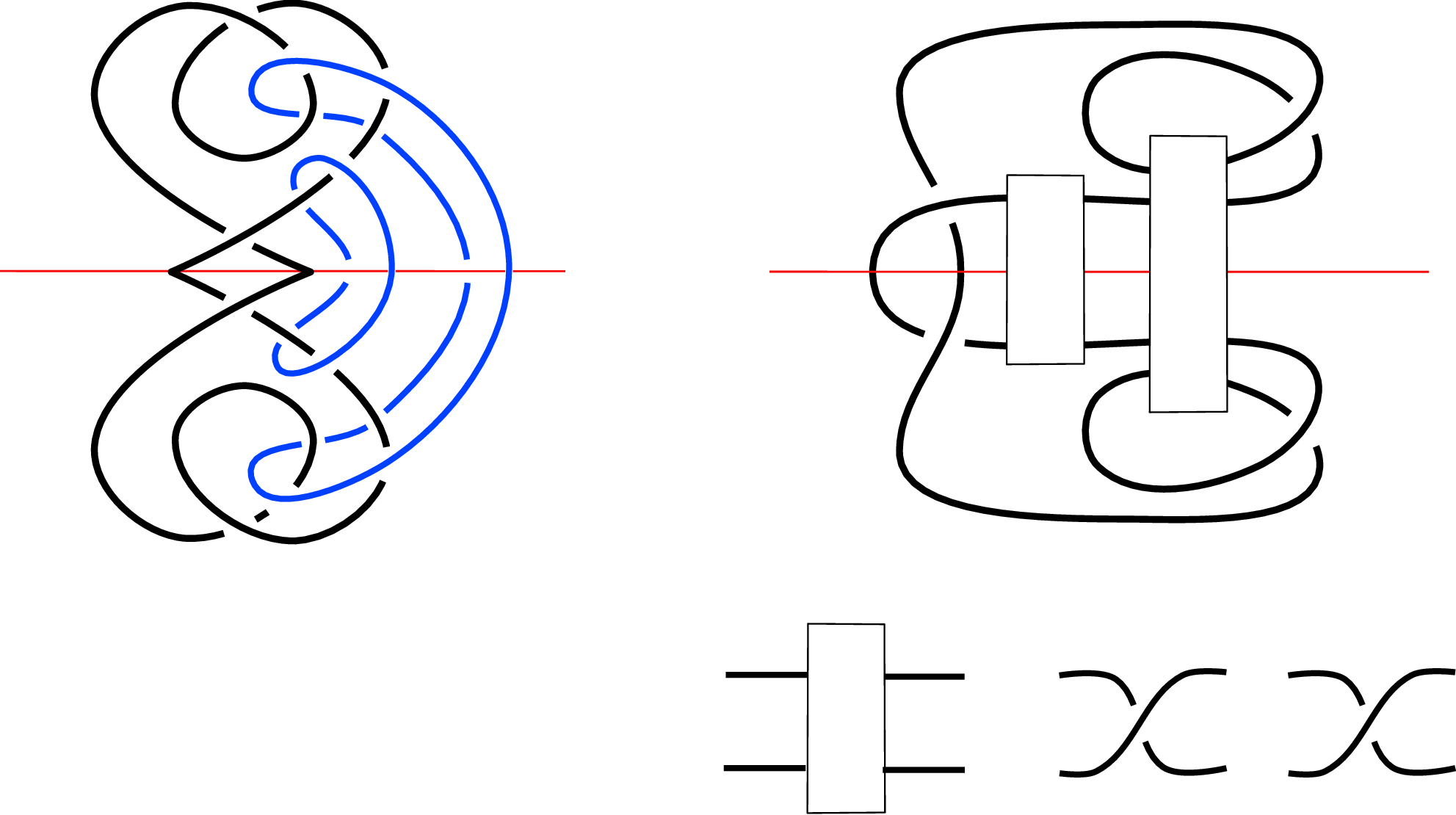}
\begin{picture}(400,0)(0,0)
\put(205, 33){$\vdots$}
\put(250, 33){$\vdots$}
\put(290, 33){$\vdots$}
\put(375, 33){$\vdots$}
\put(329, 35){$\cdots$}

\put(114, 129){$\widetilde{l}_{1}$}
\put(147, 129){$\widetilde{l}_{2}$}

\put(277, 148){$\frac{1}{2}$}
\put(310, 148){$-1$}

\put(226, 35){$\frac{n}{2}$}
\put(268, 33){$=$}
\put(305, 0){$\frac{n}{2}$-twists ($n>0$)}

\put(210, 200){$DL_P$}
\put(5, 200){$DL_0$}

\end{picture}
\caption{The transvergent diagram $DL_P$ of $L_P$ obtained from the diagram $DL_0$ of $L_0$ by performing $(-\frac{1}{2} n_{R_i})$-twists along $\widetilde{l}_i$ for $i=1, 2$.}
\label{figure:exapmle_appendix}
\end{figure}
\end{itemize}
\end{example}

\begin{proof}[Proof of Theorem \ref{theo:appendix}]
Let $q : S^3 \to S^3$ be the double covering branched along $\partial D$.
Since $L_0$ is a symmetric link whose symmetry axis is $\partial D$, 
the image $G_0:=q(L_0 \cup \partial D)$ is a trivalent graph in $S^3$.
Let $DG_0$ denote the diagram of $G_0$ induced from $DL_0$ by the branched covering map $q$. 
Let $Y_{DG_0}$ be the shadow of $(S^{3}, G_0)$ obtained by applying the method described in Example \ref{ex:shadows_of_DG} to the diagram $DG_0$. 
Let $R_1,\dots,R_n$ be the internal regions of $Y_{DG_0}$. 
For a rengion $R_i$, let $l_i$ is the preimage of a point in $R_i$ under the collapse $S^3 \to Y_{DG_0}$. 
Note that $l_i$ is an unknotted loop in $S^3$. 
By Turaev's reconstruction, 
the trivalent graph $G_P$ of the divide with cusps $P$ is obtained from $G_0$ 
by $\frac{1}{n_{R_i}}$--surgery along each $l_i$. 
Thus $G_P$ is obtained from $G_0$ by applying $(-n_{R_i})$--twists along each $l_i$.
These twists from $G_0$ to $G_P$ lift under the double branched covering $q$ and induce corresponding twists from $L_0 = q^{-1}(G_0)$ to $L_P = q^{-1}(G_P)$.
More precisely, since the double branched covering $q : S^3 \to S^3$ is expressed as \( q(x, re^{\theta i}) = (x, re^{2\theta i}) \), a twist along $l_i$ lifts to a half-twist along its lift $\widetilde{l}_i$. 
Consequently, the link $L_P$ of the divide with cusps $P$ is obtained from $L_0$ by applying $(-\tfrac12 n_{R_i})$--twists along each $\widetilde{l}_i$.
\end{proof}

\section*{Acknowledgments} 
The author would like to express his gratitude to Yuya Koda for his valuable comments and encouragement. 
The author also wishes to thank the anonymous referee for his or her valuable comments and suggestions that greatly improved the exposition.

\end{document}